\documentclass[a4paper, 12pt]{article}

\usepackage[sort&compress]{natbib}
\bibpunct{(}{)}{;}{a}{}{,} 

\usepackage{amsthm, amsmath, amssymb, mathrsfs, multirow, url, subfigure}
\usepackage{graphicx} 
\usepackage{ifthen} 
\usepackage{amsfonts}
\usepackage[usenames]{color}
\usepackage{fullpage}
\usepackage{tikz}



\theoremstyle{plain}

\newtheorem{prop}{Proposition}

\theoremstyle{definition}
\newtheorem{defn}{Definition}

\theoremstyle{remark}

\newcommand{\prob}{\mathsf{P}} 
\newcommand{\E}{\mathsf{E}}

\newcommand{\nm}{{\sf N}}

\newcommand{\chisq}{{\sf ChiSq}}

\newcommand{\RR}{\mathbb{R}}
 
\newcommand{\U}{\mathcal{U}}

\newcommand{\FF}{\mathbb{F}}

\newcommand{\YY}{\mathbb{Y}}

\newcommand{\TT}{\mathbb{T}}
\newcommand{\T}{\mathcal{T}}

\newcommand{\eps}{\varepsilon}

\newcommand{\prior}{\mathsf{Q}}

\newcommand{\cred}{\mathscr{C}}

\newcommand{\lPi}{\underline{\Pi}}
\newcommand{\uPi}{\overline{\Pi}}

\newcommand{\lGamma}{\underline{\Gamma}}

\newcommand{\lprob}{\underline{\mathsf{P}}}
\newcommand{\uprob}{\overline{\mathsf{P}}}

\newcommand{\ytdomain}{\mathfrak{F}}

\title{Fisher's underworld and the behavioral--statistical reliability balance in scientific inference}
\author{Ryan Martin\footnote{Department of Statistics, North Carolina State University, {\tt rgmarti3@ncsu.edu}}}
\date{\today}

\begin{document}

\maketitle 

\begin{abstract}
That science and other domains are now largely data-driven means virtually unlimited opportunities for statisticians.  With great power comes responsibility, so it's imperative that statisticians ensure that the methods being developing to solve these problems are reliable.  But reliable in what sense?  This question is problematic because different notions of reliability correspond to distinct statistical schools of thought, each with their own philosophy and methodology, often giving different answers in applications.  To achieve the goal of reliably solving modern problems, I argue that a balance in the behavioral--statistical priorities is needed.  Towards this, I make use of Fisher's ``underworld of probability'' to motivate a new property called {\em invulnerability} that, roughly, requires the statistician to avoid the risk of losing money in a long-run sense.  Then I go on to make connections between invulnerability and the more familiar behaviorally- and  statistically-motivated notions, namely coherence and (frequentist-style) validity.  

\smallskip

\emph{Keywords and phrases:} Bayesian; coherence; Dempster's rule; false confidence; frequentist; imprecise probability; inferential model; invulnerability; validity.
\end{abstract}

\section{Introduction}
\label{S:intro}

Science is now largely data-driven.  This means that statisticians---or people doing statistical analyses---play an important role in the advancement of scientific knowledge.  But substantial care is needed in every aspect of a scientific investigation, statistical and non-statistical, so simply ``doing statistical analyses'' isn't enough.  It's imperative that the process by which data and other available information are fused in order to quantify uncertainty about the scientifically relevant unknowns be {\em reliable}.  The statement I just made is uncontroversial, but only because I gave no specifics that one could disagree with.  Attempts to be more specific face immediate setbacks, for example:
\begin{quote}
{\em ...as to what probability is and how it is connected with statistics, there has seldom been such complete disagreement and breakdown of communication since the Tower of Babel.} \citep[][p.~2]{savage1972} 
\end{quote}
\begin{quote}
{\em Should probability enter to capture degrees of belief about claims? ... Or to ensure we won't reach mistaken interpretations of data too often in the long run of experience?} \citep[][p.~xi]{mayo.book.2018} 
\end{quote} 
These different roles mentioned by Mayo that probability plays correspond to distinct statistical schools of thought, so this hits at the very core of the subject.
\begin{quote}
{\em Two contending philosophical parties, the Bayesians and the frequentists, have been vying for supremacy over the past two-and-a-half centuries...
Unlike most philosophical arguments, this one has important practical consequences. The two philosophies represent competing visions of how science progresses and how mathematical thinking assists in that progress.} \citep{efron2013.ams}
\end{quote}
The longstanding stalemate between the two schools of thought, their distinct views on probability, and their competing notions of reliability suggests that neither is fully satisfactory.  A resolution, therefore, can only come through compromise, through balancing the distinct priorities.  This balancing act requires new perspectives on probability, and the present paper's contribution is along these lines.  




The ``underworld'' in the present paper's title refers to one of Sir Ronald Fisher's papers, entitled ``The underworld of probability'' \citep{fisher.underworld}.  There, Fisher argues that probability has levels or ranks: that there can be probability statements, probability statements about probability statements, etc., and, furthermore, that one agent might be able to make probabilistic wagers, not on the outcome of the experiment directly, but on whether or not another agent's probabilistic wagers on that outcome pay off.  For illustration, Fisher considers the familiar situation involving the toss of a six-sided die, with one of the faces identified as {\em Ace}.  What follows is roughly how \citet{fisher.underworld} described the first few levels of the underworld:
\begin{itemize}
\item {\em Level 1 uncertainty}. Agent~1 is sure that the probability of tossing an Ace is $\frac16$ but, of course, is uncertain about whether a particular toss of the die will be an Ace.  He is, however, willing to accept bets against an Ace at, say, odds 4:1.  
\vspace{-2mm}
\item {\em Level 2 uncertainty}. Agent~2 isn't sure what the probability of tossing an Ace is because he doesn't know the specific die that's to be tossed.  He is, however, sure that at least 10\% of the dice in the box from which Agent~1's die will be randomly chosen have probability $\geq \frac15$ of turning up Ace.  So, Agent~2's assessment is that Agent~1's odds against an Ace may be favorable to his opponents; therefore, Agent~2 would accept bets at odds 9:1 that Agent~1---with his ``accept bets against an Ace at odds 4:1'' rule---will have negative capital in the long run.  
\end{itemize}
Note that, in the Level~2 case, the agent may or may not have his own probability assessment of tossing an Ace.  That is, the description above gives insufficient information to pin down a precise estimate of Agent~2's probability of an Ace.  Agent~2 is saying only that (a)~he thinks Agent~1 is wrong and (b)~he can formulate odds at which he'd bet against the success of Agent~1.  Of course, this process need not stop at Agent~2, it can go even deeper into the underworld, getting less familiar:
\begin{itemize}
\item {\em Level 3 uncertainty}. Agent~3 ``does not know and does not think he knows... the probability of throwing an Ace, but believes he knows how the probability is distributed in a consignment of boxes, one of which has been used to supply the die in question, may know with confidence what odds he can profitably accept in betting against the success of [Agent~2]'s wager'' \citep[][p.~203]{fisher.underworld}.
\end{itemize} 
This process could continue to Level~4, 5, etc., but Level~3 is a deep enough descent into the underworld for our purposes.  Again, Agent~3 need not weigh in on the questions directly relevant to Agents~1--2, his sole objective is capitalizing on a perceived fallibility of Agent~2's assessments, who in turn is hoping to take advantage of a perceived fallibility of Agent~1's assessments.  That is, Agent~3 has side-bets pertaining to the success of Agent~2's side-bets on the success of Agent~1's bets on the die toss.  

How is this practically relevant?  My claim---which I believe was also Fisher's---is that Agents~2--3 in the above scenario have counterparts in the statistical inference problem.  To make sense of this claim, let $\Theta$ denote the probability of tossing an Ace.  Just like in Fisher's scenario above, this value is uncertain to Agent~2, but suppose he is able to quantify his uncertainty by making (possibly imprecise) probability statements about $\Theta$ based on the available information, which might include data, expert opinions, personal beliefs, etc.  This makes Agent~2 like the statistician in a statistical inference problem.  He is, of course, free to quantify his uncertainty about $\Theta$ as he pleases, but if his resulting inferences are to be meaningful in any real-world sense, then they must be capable of standing up to scrutiny from the relevant parties, e.g., society, the scientific community, etc.  In real applications, we only get one shot to draw inferences and, since the truth about $\Theta$ is rarely if ever revealed, this scrutiny can't be based on whether Agent~2 is ``correct'' or not; it must be based on the reliability of his {\em methods}.  Then Agent~3 in Fisher's scenario is the scrutinizer, and if he can identify fallibility or vulnerability in the statistician's uncertainty quantification, i.e., that he can profit off of the perceived systematic errors, then the statistician's probability assessments have been proven unsound or, more precisely, the methods employed to arrive at these probability statements have been proven unreliable.  It's in this sense that Fisher's underworld is practically relevant: it aids in understanding and formulating on a common ground the distinct (probabilistic) objectives of the various parties who have a stake in scientific inference.   


In light of these observations, the statistician concerned with the reliability of his inferences might aim to ensure that no scrutinizer can identify and capitalize off of a lurking vulnerability in his methods.  This puts a constraint on the kinds of methods the statistician can use and what mathematical form his uncertainty quantification takes.  The goal of this paper is to carefully formulate this notion of {\em invulnerability} and to understand the aforementioned constraints and how they can be satisfied.  

Following some background on imprecise probability and statistical inference in Section~\ref{S:background}---in particular, on {\em inferential models} \citep[IMs,][]{imbasics, imbook} for quantifying statistical uncertainty---I'll define a notion of invulnerability for IMs that aligns with the above intuition, i.e., scrutinizers shouldn't be able to capitalize off of fallibility in the statistician's probability assessments.  Then Proposition~\ref{prop:suf} shows that an appropriately defined generalized Bayes IM is invulnerable.  This draws an unexpected connection between Fisher's unique brand of non-subjective, precise-probabilistic reasoning and the largely subjective considerations that dominate the imprecise-probabilistic literature. 


The trouble is that generalized Bayes is rather conservative, so it fails to strike the behavioral--statistical reliability balance.  If invulnerability needs to be relaxed a bit in order to accommodate the statistical reliability considerations, then a relevant question is: in what direction should it be relaxed?  In Section~\ref{S:valid}, I introduce the (partial-prior) validity condition that I've been advocating for recently \citep[e.g.,][]{martin.partial, martin.partial2} and show that validity is a sufficient condition for no sure-loss and a necessary condition for invulnerability.  These results together motivate relaxing invulnerability in the direction of validity.  Moreover, Proposition~\ref{prop:nec} generalizes and also gives a new monetary-loss interpretation to the {\em false confidence theorem} in \citet{balch.martin.ferson.2017}, \citet{martin.nonadditive}, and elsewhere.  That is, the result here shows that if validity fails and the statistician's IM is afflicted with false confidence, then he's at risk of losing money in the long-run.  I hope that this new perspective will help make clear to those who deny the false confidence phenomenon \citep[e.g.,][]{prsa.conf} what's at risk.  

When the available prior information about the relevant unknown is vacuous, validity is equivalent to strong validity and, there, the construction of valid IMs is well-studied.  When the partial prior information is not vacuous, all the IMs I've developed recently \citep[e.g.,][]{martin.partial2} are strongly valid, and it's not yet clear what's the relationship between invulnerability and strong validity.  In Section~\ref{S:balance}, I offer a partial prior IM construction that is valid but not strongly valid; this is based on expressing a (strongly) valid vacuous-prior IM and the partial prior information as belief functions \citep[e.g.,][]{shafer1976, cuzzolin.book} and combining them using Dempster's rule of combination.  Then Proposition~\ref{prop:dempster} establishes that this Dempster's rule-based IM is valid.  A simple numerical example illustrates how this valid partial prior IM helps to strike a behavioral--statistical balance between the vacuous prior IM that ignores the available information and the overly-conservative generalized Bayes IM.


\section{Problem setup}
\label{S:background}

\subsection{Model formulation}
\label{SS:ip}

Let $(Y,\Theta)$ be a pair of uncertain variables, taking values in the possibility space $\YY \times \TT$.  The pair will be modeled by a general lower and upper prevision pair $(\lprob_{Y,\Theta}, \uprob_{Y,\Theta})$ whose domain $\ytdomain$ is the linear space of bounded gambles.  The ``gamble'' terminology comes from the interpretation of $f(Y,\Theta)$ as the uncertain payoff an agent receives when the value of $(Y,\Theta)$ is revealed.  Since the lower and upper previsions are linked via conjugacy, i.e., $\lprob_{Y,\Theta}(f) = -\uprob_{Y,\Theta}(-f)$ for each $f$, it suffices to discuss just one of them, say, the lower prevision $\lprob_{Y,\Theta}$.  When the gamble is an indicator function, $f=1_B$ for $B \subset \YY \times \TT$, I'll write $\lprob_{Y,\Theta}(B)$ instead of $\lprob_{Y,\Theta}(1_B)$.  In this case, conjugacy takes the form 
\[ \lprob_{Y,\Theta}(B) = 1 - \uprob_{Y,\Theta}(B^c), \quad B \subseteq \YY \times \TT. \]

The standard subjective interpretation of an agent's lower/upper prevision pair is as bounds on the prices at which he's willing to buy/sell gambles concerning the uncertain $(Y,\Theta)$.  Indeed, as \citet[][Sec.~2.2]{miranda.cooman.chapter} explain, 
\begin{align*}
\lprob_{Y,\Theta}(f) & = \text{agent's supremum acceptable buying price for the gamble $f$} \\
\uprob_{Y,\Theta}(f) & = \text{agent's infimum acceptable selling price for the gamble $f$}.
\end{align*}
In case the above two prices happen to be the same, that common value would be interpreted as the ``fair price'' for the gamble $f$.  Generalizing this ``fair price'' notion, a gamble $f$ is {\em almost desirable} if $\lprob_{Y,\Theta}(f) \geq 0$. 
For more on lower previsions and desirability, see \citet[][Sec.~3.7.3]{walley1991}, \citet[][Sec.~2.2.3]{miranda.cooman.chapter}, and \citet[][Sec.~1.6.3]{desirability.chapter}.  Restricting gambles to indicators, one could interpret $\lprob_{Y,\Theta}(B)$ and $\uprob_{Y,\Theta}(B)$ as subjective measures of the degree of support and plausibility, respectively, for the assertion or hypothesis ``$(Y,\Theta) \in B$.'' This is consistent with the interpretation in \citet{dempster1967, dempster1968a} and \citet{shafer1976}, even if the lower/upper previsions here don't completely agree with their mathematical formulation.  

To ensure that a lower prevision deserves its ``supremum buying price'' interpretation, certain mathematical structure is needed.  Following \citet[][Sec.~2.5.1]{walley1991}, who built on \citet{definetti1937, definetti.book.1972}, I say that the lower prevision $\lprob_{Y,\Theta}$ is {\em coherent} if 
\begin{align}
\sup_{y,\theta} \Bigl[\sum_{k=1}^K \{ f_k(y,\theta) - \lprob_{Y,\Theta}(f_k)\} & - m \{ f_0(y,\theta) - \lprob_{Y,\Theta}(f_0)\} \Bigr] \geq 0 \notag \\
& \text{all integers $m,K \geq 0$ and all $f_0,f_1,\ldots,f_K \in \ytdomain$}. \label{eq:coherent} 
\end{align}
The case where $m=0$ corresponds to what's called {\em no sure-loss}, which implies that any finite sum of almost desirable gambles is almost desirable.  The general coherence condition \eqref{eq:coherent} simply means that the agent's ``supremum'' buying price $\lprob_{Y,\Theta}(f_0)$ for $f_0$ can't be raised by simultaneously considering a positive linear combination of other almost desirable gambles \citep[e.g.,][Sec.~2.2.1]{miranda.cooman.chapter}. Since virtually all the standard imprecise probability models are coherent, I'll assume throughout this paper, with (virtually) no loss of generality, that $\lprob_{Y,\Theta}$ is a coherent lower prevision.  

There are a number of key consequences of coherence, and here I'll mention just a few.  First, Proposition~4.7 in \citet{lower.previsions.book} says that coherence implies $\lprob_{Y,\Theta}(f) \leq \uprob_{Y,\Theta}(f)$ for any gamble $f \in \ytdomain$, which justifies the ``lower'' and ``upper'' adjectives.  Second, if the lower prevision is coherent and agrees with its corresponding upper prevision, i.e., if $\lprob_{Y,\Theta} = \uprob_{Y,\Theta}$, then the common function, say $\prob_{Y,\Theta}$ is called a linear prevision, or precise probability model, and $\prob_{Y,\Theta}$ reduces to an expectation operator with respect to a finitely additive probability \citep[e.g.,][Sec.~2.8.1]{walley1991}.  Third, $\lprob_{Y,\Theta}$ equals the lower envelope of a closed and convex (credal) set of linear previsions \citep[e.g.,][Prop.~3.3.3]{walley1991}. For more details, see \citet{miranda.cooman.chapter}.  

A common example in the statistical literature is to model $Y$, given $\Theta=\theta$, by a precise probability/linear prevision $\prob_{Y|\theta}$ and a vacuous prior for $\Theta$.  While the ``no prior'' case can't be accommodated within precise probability theory, it can easily be handled using imprecise probability.  Indeed, a minimal extension \citep[][Sec.~6.7]{walley1991} of these individual lower previsions to a coherent lower prevision for $(Y,\Theta)$ is given by 
\[ \lprob_{Y,\Theta}(f) = \inf_{\theta \in \TT} \prob_{Y|\theta}\{ f(Y,\theta) \}, \quad f \in \ytdomain, \]
i.e., the infimum of the $\prob_{Y|\theta}$-expectation of $f(Y,\theta)$ over all $\theta \in \TT$.  For example, if $f=1_B$ is an indicator gamble and $B = A \times H$ is a rectangle in $\YY \times \TT$, then 
\[ \lprob_{Y,\Theta}(B) = \inf_{\theta \in H} \prob_{Y|\theta}(A) \quad \text{and} \quad \uprob_{Y,\Theta}(B) = \sup_{\theta \in H} \prob_{Y|\theta}(A). \]

Despite being common in the statistics literature, the vacuous or ``no prior'' case above is rather extreme.  The general imprecise probability formulation is able to handle all the intermediate/less-extreme cases where, for example, something is known about $\Theta$ that's relevant to the data analysis, but not enough to justify a precise prior distribution for $\Theta$ and the corresponding Bayesian analysis.  This, of course, is related to robustness \citep[e.g.,][]{huber1981}, especially Bayesian prior robustness \citep[e.g.,][]{berger1984}, but there are other motivations for imprecision in statistical applications, as explained in, e.g., \citet[][Ch.~7]{walley1991} and \citet[][Ch.~7]{imprecise.prob.book}; see, also, \citet{martin.partial, martin.partial2}.

\subsection{Statistical inference}
\label{SS:stat}

In statistical applications, the pair $(Y,\Theta)$ decomposes as an observable data point $Y$ and an (unobservable) unknown to be inferred.  When the value $y$ of $Y$ is observed, the goal is to quantify uncertainty about $\Theta$, given $Y=y$.  
\begin{quote}
{\em Statisticians want numerical measures of the degree to which data support hypotheses.} \citep[][p.~122]{hacking.logic.book}
\end{quote}
Towards this, the statistician will choose a mapping $y \mapsto (\lPi_y, \uPi_y)$ that takes a given value of $y$ and returns a data-dependent lower/upper prevision for $\Theta$; the domain $\ytdomain_y$ is the linear space consisting of bounded gambles $f_y(\theta) = f(y,\theta)$ on $\TT$.  I refer to this mapping as an {\em inferential model}, or IM, and I assume that the collection $\{\lPi_y: y \in \YY\}$ are separately coherent lower previsions \citep[e.g.,][Def.~6.2.2]{walley1991}.  That is, for each $y$, $\lPi_y$ is a coherent lower prevision in the sense that
\begin{align*}
\sup_{\theta} \Bigl[\sum_{k=1}^K \{ f_{k,y}(\theta) & - \lPi_y(f_{k,y})\} - m \{ f_{0,y}(\theta) - \lPi_y(f_{0,y})\} \Bigr] \geq 0 \\
& \text{all integers $m,K \geq 0$ and all $f_{0,y},f_{1,y},\ldots,f_{K,y} \in \ytdomain_y$}, 
\end{align*}
and, moreover, $\lPi_y(\{y\} \times \TT) = 1$. Of course, the IM's output can (and will) depend on aspects of the posited model $(\lprob_{Y,\Theta}, \uprob_{Y,\Theta})$, but I'll suppress that dependence in the notation.  The lower/upper prevision will be used as follows: for any relevant hypothesis ``$\Theta \in H$'' about the unknown, the statistician evaluates $\lPi_y(H)$ and $\uPi_y(H)$ and, based on these magnitudes, makes their judgments/inferences.  

Interpretation of the IM is the same as the lower/upper previsions described above.  That is, the IM's numerical output represents lower and upper bounds on the prices deemed acceptable for relevant gambles, i.e., given $Y=y$, 
\begin{align*}
\lPi_y(f_y) & = \text{statistician's supremum buying price for the gamble $f_y$} \\
\uPi_y(f_y) & = \text{statistician's infimum selling price for the gamble $f_y$}.
\end{align*}
In other words, given $Y=y$, the statistician is willing to pay at most $\lPi_y(f_y)$ dollars to a speculator in exchange for $f_y(\Theta)$ dollars; similarly, he's willing to accept at least $\uPi_y(f_y)$ dollars from the speculator in exchange for $f_y(\Theta)$ dollars.  In the familiar case of a precise probability, the lower and upper previsions are the same and their common value, $\Pi_y(f_y)$, say, would be the statistician's fair price for $f_y$.  When restricted to indicator gambles, $\lPi_y(H)$ and $\uPi_y(H)$ could represent, respectively, the statistician's degree of belief and of plausibility in the truthfulness of the claim ``$\Theta \in H$,'' given $Y=y$.  

From the aforementioned broad form of (imprecise-)probabilistic uncertainty quantification, it's possible to derive statistical procedures for hypothesis testing and point/set estimation, if desired.  For example, to test the hypothesis ``$\Theta \in H$,'' corresponding to a fixed subset $H \subseteq \TT$, it would be natural to ``reject $H$'' based on data $Y=y$ if and only if $\uPi_y(H)$ is smaller than some specified threshold.  It's important to have access to these procedures, but it won't be relevant to the present discussion, so I won't get into these details; the interested reader can consult, e.g., \citet{walley2002}, \citet{imbasics}, \citet{augustin.etal.bookchapter}, and/or \citet{imchar, martin.partial, martin.partial2}.  

There are an unlimited number of IMs for the statistician to choose from.  Common choices of IM construction in the statistical literature, mostly in the case of vacuous prior information about $\Theta$, include Fisher's fiducial argument \citep[e.g.,][]{zabell1992, seidenfeld1992, fisher1935a}, default-prior Bayes \citep[e.g.,][]{jeffreys1946}, and generalized fiducial \citep[e.g.,][]{hannig.review}.  Still in the case of vacuous prior information, there are the random set-driven IM constructions in \citet{imbasics, imcond, immarg, imbook} and the more direct approaches in \citet{plausfn, gim}; see, also, \citet{cella.martin.imrisk}.  A generalization allowing non-vacuous prior information, is presented in \citet{martin.partial2}.  

Arguably the most natural IM construction is based on updating the posited model for $(Y,\Theta)$ to account for the observed $Y=y$, analogous to Bayes's rule in ordinary/precise probability.  As expected, this is more complicated than the traditional presentations of Bayes's theorem, and Chapters~6--7 of \citet{walley1991} are devoted to conditioning in the context of general lower/upper previsions.  Walley's focus is largely on what kind of properties are required of the posited (joint/marginal) lower prevision $\lprob_{Y,\Theta}$ to ensure the existence (and uniqueness) of a collection $\{\lprob_{\Theta|y}: y \in \YY\}$ of conditional lower previsions that are separately coherent and also jointly coherent with $\lprob_{Y,\Theta}$.  Here's a summary.  

Consider the partition of $\YY \times \TT$ determined by the $y$-slices 
\begin{equation}
\label{eq:partition}
\mathscr{B} = \bigl\{ B_y=\{y\} \times \TT: y \in \YY \bigr\}. 
\end{equation}
Throughout, when discussing the generalized Bayes IM, I'll (sometimes silently) assume that the posited lower prevision $\lprob_{Y,\Theta}$ is $\mathscr{B}$-{\em conglomerable}, i.e., 
\begin{quote}
for any countable collection $\{B_{y_k}: k=1,2,\ldots\}$ of (distinct) subsets in the partition $\mathscr{B}$, if $\lprob_{Y,\Theta}(1_{B_{y_k}} f) > 0$ for all $k$, then $\lprob_{Y,\Theta}(1_{\bigcup_k B_{y_k}} f) \geq 0$. 
\end{quote}
In words, this says that, if the restriction of $f$ to each set in the partition is desirable, then $f$ itself must also be almost desirable.  Conglomerability has a number of far-reaching consequences and \citet[][Sec.~6.8.4]{walley1991} argues that full conglomerability (i.e., $\mathscr{B}$-conglomerability for every partition $\mathscr{B}$) ought to be a requirement for a rational model $\lprob_{Y,\Theta}$.  Moreover, conglomerability is a relatively weak requirement: it's automatic when $\mathscr{B}$ is finite, which is guaranteed when $\YY$ is finite, and also when $\lprob_{Y,\Theta}(B)=0$ for all $B \in \mathscr{B}$, which is often the case if $\YY$ is uncountable.  For more on conglomerability and its consequences, see \citet{dubins1975}, \citet{ssk.extent.of.noncong}, \citet{kss.finite.additive}, \citet{ssk.noncong.fap}, \citet{miranda.zaffalon.2013, miranda.zaffalon.full}, and \citet{berti.etal.conglomerability}.  

The key point is that conglomerability is a necessary and sufficient condition for existence of separately coherent conditional lower previsions $\{\lprob_{\Theta|y}: y \in \YY\}$ that are also jointly coherent with $\lprob_{Y,\Theta}$; see Theorem~6.8.2(a) in \citet{walley1991}, Section~5.3 in \citet{berti.etal.conglomerability}, and elsewhere.  Beyond existence, Walley's Theorem~6.8.2(c) describes the minimal conditional lower prevision $\{\lGamma_y: y \in \YY\}$ that's coherent with $\lprob_{Y,\Theta}$, which I refer to here as the {\em generalized Bayes IM}, defined as follows.  For each $y \in \YY$: 
\begin{itemize}
\item if $\lprob_{Y,\Theta}(\{y\} \times \TT) = 0$, then 
\[ \lGamma_y(f_y) = \inf_{\theta \in \TT} f(y,\theta), \quad f \in \ytdomain; \] 
\vspace{-8mm}
\item if $\lprob_{Y,\Theta}(\{y\} \times \TT) > 0$, then $\lGamma_y(f_y)$ is given by the {\em generalized Bayes rule} \citep[e.g.,][Theorem~6.4.2]{walley1991}, i.e., 
\[ \lGamma_y(f_y) = \min\Bigl\{ \frac{\prob_{Y,\Theta}(f \, 1_{\{y\} \times \TT})}{\prob_{Y,\Theta}(1_{\{y\} \times \TT})}: \prob_{Y,\Theta} \in \cred(\lprob_{Y,\Theta}) \Bigr\}, \quad f \in \ytdomain, \]
where $\cred(\lprob_{Y,\Theta})$ is the (credal) set of all linear previsions $\prob_{Y,\Theta}$ that dominate the lower prevision $\lprob_{Y,\Theta}$.  The ratio in the above display is the familiar expectation of the gamble $f_y(\Theta)$ with respect to the Bayesian posterior for $\Theta$, given $Y=y$. 
\end{itemize}

\section{Invulnerability}
\label{S:underworld}

Let's return now to Fisher's underworld, but specifically in the context of statistical inference, where Agent~2 is the statistician and Agent~3 is the scrutinizer.  The scrutinizer knows both the data $y$ and the statistician's IM, so in principle he could probe the statistician's assessments and determine if there's any vulnerability, if there are any weaknesses he can potentially profit off of in side-bets.  It's not all about the money for the strutinizer; he's also playing an important role in the scientific process: 
\begin{quote}
{\em Still greater onus is laid on men at large to criticize and probe the evidence given by [statisticians], to question whether they were men trained to observe...} \citep[][p.~54]{pearson.grammar}
\end{quote}
There are a couple relevant questions concerning the meaning of ``profit off of'' that the reader may be asking his/herself, so let me address those here first before moving on.
\begin{itemize}
\item {\em How is Fisher's ``profit in the long-run'' notion relevant to scientific inference?} 

Arguments for the single-$y$ perspective, opposed to a long-run-over-multiple-$y$ perspective, are initially compelling.  That is, why should other values of $Y$ that weren't observed have any bearing on the inferences drawn?  But the issue here is that, as Popper argued, individual probability statements can't be falsified, so the statistician---even with his IM that's coherent for the given $y$---can't be wrong.  Moreover, $\Theta$ will never be revealed, so even the inferences drawn by the statistician can never be proven wrong.  This creates a false sense of security, reinforces attitudes like in the familiar meme ``Being a statistician means never having to say you're certain,'' and breeds carelessness.  The ``long-run'' perspective, however, puts the burden on the statistician to choose an IM whose uncertainty quantification is provably reliable, i.e., his inferences tend to be right.  Finally, the aforementioned reliability is just a probability evaluation with respect to the statistician's posited model which, by the way, doesn't require a ``long-run'' interpretation.  


\item {\em Why does the statistician care if a scrutinizer can play side-bets and profit off of his perceived vulnerabilities?}

If it's not the statistician's money going into the scrutinizer's pocket, then why should he care?  I'll show below that the statistician's interpretation of his IM output (cf.~Section~\ref{SS:stat}) obligates him to participate in these side-bets with the scrutinizer.  So, if there are vulnerabilities in the statistician's uncertainty quantification, then {\em it is his money} going in the scrutinizer's pocket!  It's precisely this connection to the scrutinizer that forces the statistician to have skin in the game \citep[e.g.,][]{taleb.skin, crane.fpp}, i.e., something he personally stands to lose if he's wrong.  This skin in the game is broadly beneficial to society---it forces the statistician to be careful.  
\end{itemize} 

To formalize this, recall that, for any $H \subseteq \TT$ and $\beta \in [0,1]$, if data $Y$ is such that $\lPi_Y(H) > \beta$, then the statistician would be willing to pay the scrutinizer \$$\beta$ for \$$1(\Theta \in H)$.  So, if the scrutinizer believes that the statistician's IM tends to assign too large of $\lPi_Y(H)$ values, and can roughly gauge how large is ``too large,'' then he has a chance to profit off of the statistician's miscalculation.  Towards this, define the function
\begin{equation}
\label{eq:W}
f^{H,\beta}(y,\theta) = \{1(y \in \YY, \theta \in H) - \beta\} \, 1\{\lPi_y(H) > \beta, \theta \in \TT\}.
\end{equation}
Then $f^{H,\beta}(Y,\Theta)$ represents the statistician's payoff on this $(H,\beta)$-dependent gamble, which is an uncertain variable both as a function of $(Y,\Theta)$ and as a function of $\Theta$ with $Y=y$ fixed.  It's not difficult to verify that, 
\[ \lPi_y(f_y^{H,\beta}) \geq 0, \quad \text{for all $H \subseteq \TT$, all $\beta \in [0,1]$, and all $y \in \YY$}, \]
where
\[ f_y^{H,\beta}(\theta) = f^{H,\beta}(y,\theta), \quad \theta \in \TT. \]
That is, for any $(H,\beta)$ pair, the gamble $f_y^{H,\beta}$ is almost desirable to the statistician, relative to his IM's fixed-$y$ assessments, for each $y$.  It's in this sense that the statistician is inclined to participate in the scrutinizer's side-bets, exposing him to potential loss in the event that his IM is vulnerable.  The following definition spells this out explicitly.  

\begin{defn}
\label{def:iv}
Given a coherent model $(\lprob_{Y,\Theta}, \uprob_{Y,\Theta})$ for $(Y,\Theta)$, the statistician's IM is {\em invulnerable} if the gamble $f^{H,\beta}$ in \eqref{eq:W}, which depends implicitly on the aforementioned IM, is almost desirable with respect to $\lprob_{Y,\Theta}$ for all $H$ and all $\beta$, i.e., if 
\[ \lprob_{Y,\Theta}(f^{H,\beta}) \geq 0 \quad \text{for all $H \subseteq \TT$ and $\beta \in [0,1]$}; \]
otherwise, the IM is {\em vulnerable}. 
\end{defn}

To the most cautious of statisticians, namely, those who seek protection against all scrutinizers, invulerability would be a very natural condition.  Basically, invulnerability says that a scrutinizer can offer no gamble of the form \eqref{eq:W} that's acceptable to the statistician as a function of $\Theta$ for fixed $y$ but unacceptable on average as a function of $(Y,\Theta)$.  There are some connections to coherence, which I'll discuss below. 

For some further intuition behind the invulnerability condition, I'd like to make an analogy to Kerckhoff's principle in cryptography:
\begin{quote}
{\em [D]esign [crypto]systems under the assumption that the enemy will immediately gain full familiarity with them.} \citep{shannon.secrecy.1949}
\end{quote}
That is, Kerckhoff's principle says assume that attackers are familiar with your encryption system and design it to be secure regardless of attackers' familiarity.  This is the guiding principle behind commonly used public key encryption systems.  Like Kerckhoff, invulnerability implies protection from loss even when the scrutinizer has full knowledge of the statistician's IM and the time/skills to probe for potential vulnerabilities.  

Despite being a natural or intuitive requirement, there are relevant questions concerning invulnerability that need to be answered.  The first basic question is if an invulnerable IM even exists, and the following proposition answers this in the affirmative.  Indeed, under mild conditions, the generalized Bayes IM in Section~\ref{SS:stat} is invulnerable. 

\begin{prop}[generalized Bayes is invulnerable]
\label{prop:suf}
Suppose that the model $\lprob_{Y,\Theta}$ for $(Y,\Theta)$ is $\mathscr{B}$-conglomerable, where $\mathscr{B}$ is the $y$-slice partition in \eqref{eq:partition}.  Let $\{\lGamma_y: y \in \YY\}$ denote the generalized Bayes IM, whose existence is implied by the assumed conglomerability.  If the statistician's IM $y \mapsto (\lPi_y, \uPi_y)$ satisfies $\lPi_y \leq \lGamma_y$ for each $y$, then it's invulnerable in the sense of Definition~\ref{def:iv}. 
\end{prop}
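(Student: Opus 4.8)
The plan is to reduce the requirement $\lprob_{Y,\Theta}(f^{H,\beta}) \ge 0$ to a statement about the individual $y$-slices $B_y$ and then reassemble the slices using $\mathscr{B}$-conglomerability. First I would simplify: since ``$y \in \YY$'' and ``$\theta \in \TT$'' are sure, $f^{H,\beta}(y,\theta) = \{1_H(\theta) - \beta\}\,1_A(y)$ with $A := \{y \in \YY : \lPi_y(H) > \beta\}$, so $f^{H,\beta}$ vanishes off $\bigcup_{y \in A} B_y$ and, on a slice $B_y$ with $y \in A$, equals the gamble $1_H - \beta$ on $\TT$. Several boundary situations are dismissed at once by monotonicity of coherent lower previsions, since in each $f^{H,\beta}$ is nonnegative or identically $0$: $\beta = 0$ gives $f^{H,\beta} = 1_H\,1_A \ge 0$; $\beta = 1$ forces $A = \emptyset$ because $\lPi_y(H) \le 1$; and $H \in \{\emptyset,\TT\}$ gives, via $\lPi_y(\emptyset) = 0 \le \beta$ and $\lPi_y(\TT) = 1$, either $A = \emptyset$ or the nonnegative constant $f^{H,\beta} = 1-\beta$. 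So I may assume $\emptyset \neq H \subsetneq \TT$ and $\beta \in (0,1)$.

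Next I would pin down which $y$ contribute. If $\lprob_{Y,\Theta}(B_y) = 0$, then the generalized Bayes IM makes $\lGamma_y$ the vacuous conditional, so $\lGamma_y(H) = \inf_{\theta \in \TT} 1_H(\theta) = 0$ (using $H \neq \TT$); since the hypothesis is $\lPi_y \le \lGamma_y$, this forces $\lPi_y(H) \le 0 \le \beta$, i.e.\ $y \notin A$. Hence $A \subseteq \{y : \lprob_{Y,\Theta}(B_y) > 0\}$, and this latter set is countable: were there $N > n$ distinct slices $B_{y_1},\ldots,B_{y_N}$ each of lower probability $> 1/n$, then superadditivity of the coherent $\lprob_{Y,\Theta}$ together with their disjointness (so $\sum_{j=1}^{N} 1_{B_{y_j}}$ is an indicator) would give $\lprob_{Y,\Theta}\bigl(\sum_{j=1}^{N} 1_{B_{y_j}}\bigr) > N/n > 1$, contradicting the coherence bound $\lprob_{Y,\Theta}(g) \le \sup g$; so each $\{y : \lprob_{Y,\Theta}(B_y) > 1/n\}$ is finite and $A$ is (at most) countable. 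Write $A = \{y_1,y_2,\ldots\}$.

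For the slice estimate I would fix $y \in A$, so $\lprob_{Y,\Theta}(B_y) > 0$ and the generalized Bayes rule applies: by its characterization \citep[][Theorem~6.4.2]{walley1991}, $\mu := \lGamma_y(f_y^{H,\beta})$ is the unique real with $\lprob_{Y,\Theta}\bigl(1_{B_y}\{f^{H,\beta} - \mu\}\bigr) = 0$. By constant additivity of the coherent $\lGamma_y$ and $\lPi_y \le \lGamma_y$, $\mu = \lGamma_y(1_H - \beta) = \lGamma_y(H) - \beta \ge \lPi_y(H) - \beta > 0$. Writing $1_{B_y}f^{H,\beta} = 1_{B_y}\{f^{H,\beta}-\mu\} + \mu\,1_{B_y}$ and using superadditivity and positive homogeneity of $\lprob_{Y,\Theta}$, I get $\lprob_{Y,\Theta}(1_{B_y}f^{H,\beta}) \ge 0 + \mu\,\lprob_{Y,\Theta}(B_y) > 0$. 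Since $f^{H,\beta}$ is supported on $\bigcup_k B_{y_k}$ with $\lprob_{Y,\Theta}(1_{B_{y_k}}f^{H,\beta}) > 0$ for every $k$, $\mathscr{B}$-conglomerability delivers $\lprob_{Y,\Theta}(f^{H,\beta}) = \lprob_{Y,\Theta}\bigl(1_{\bigcup_k B_{y_k}}f^{H,\beta}\bigr) \ge 0$, which is invulnerability.

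The step I expect to be the main obstacle is managing the two regimes in the definition of the generalized Bayes IM: the slices with $\lprob_{Y,\Theta}(B_y) = 0$ must be quarantined (shown to lie outside $A$) before the generalized Bayes rule can be invoked on the rest, and because the conglomerability hypothesis as stated applies only to countable families, the countability of $\{y : \lprob_{Y,\Theta}(B_y) > 0\}$ is doing genuine work rather than being cosmetic. Everything else is the routine arithmetic of coherent lower previsions (monotonicity, constant additivity, superadditivity, positive homogeneity); as a consistency check, collapsing the slice computation to a single $y$ reproduces the already-noted inequality $\lPi_y(f_y^{H,\beta}) \ge 0$.
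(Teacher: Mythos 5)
Your proof is correct, but it takes a genuinely different route from the paper's. The paper argues at the level of gambles on the whole space $\YY \times \TT$: it telescopes $1(\Theta \in H) - \beta$ into the three increments $\{1(\Theta\in H) - \lGamma_Y(H)\} + \{\lGamma_Y(H) - \lPi_Y(H)\} + \{\lPi_Y(H) - \beta\}$, multiplies by the $\mathscr{B}$-measurable indicator $\phi(y,\theta) = 1\{\lPi_y(H) > \beta\}$, and lower-bounds by superadditivity; the last two terms are pointwise nonnegative, and the first is handled by citing Walley's Property~8 of Section~6.3.5 (a conglomerative-coherence property of the generalized-Bayes conditionals against $\mathscr{B}$-measurable multipliers), so no case analysis, no countability bookkeeping, and no explicit treatment of the null slices is needed. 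You instead verify invulnerability slice by slice: you quarantine the slices with $\lprob_{Y,\Theta}(B_y)=0$ (where the hypothesis $\lPi_y \le \lGamma_y$ forces $y \notin A$), prove via superadditivity that only countably many slices can have positive lower probability, use the GBR fixed-point characterization $\lprob_{Y,\Theta}(1_{B_y}\{f^{H,\beta}-\mu\})=0$ together with $\lGamma_y(H) \ge \lPi_y(H) > \beta$ to get strict positivity of $\lprob_{Y,\Theta}(1_{B_y}f^{H,\beta})$ on each contributing slice, and then invoke exactly the countable-collection form of $\mathscr{B}$-conglomerability stated in Section~\ref{SS:stat}. What your version buys is self-containedness relative to the paper's own definition of conglomerability (you never need Walley's Property~8, only the GBR characterization and elementary coherence properties), plus an explicit demonstration of why the strict-positivity hypothesis in that definition is attainable and why countability genuinely matters; what the paper's version buys is brevity and uniformity, since the three-term decomposition needs no splitting into null and non-null slices and no enumeration of $A$. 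The only cosmetic omission on your side is the trivial case $A = \varnothing$ (then $f^{H,\beta} \equiv 0$ and coherence gives $\lprob_{Y,\Theta}(f^{H,\beta}) = 0$), and for finite $A$ plain superadditivity already suffices in place of conglomerability; neither affects correctness.
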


\begin{proof}
For any fixed $H \subseteq \TT$ and $\beta \in [0,1]$, define the function 
\[ \phi(y,\theta) = 1\{ \lPi_y(H) > \beta, \theta \in \TT\}. \]
Then, using Proposition~2.2(v) in \citet{miranda.cooman.chapter}, 
\begin{align*}
\lprob_{Y,\Theta}(f^{H,\beta}) & = \lprob_{Y,\Theta}\bigl[ \{1(\Theta \in H) - \beta \} \, \phi(Y,\Theta) \bigr] \\
& = \lprob_{Y,\Theta}\bigl[ \{1(\Theta \in H) - \lGamma_Y(H) + \lGamma_Y(H) - \lPi_Y(H) + \lPi_Y(H) - \beta \} \, \phi(Y,\Theta) \bigr] \\
& \geq \lprob_{Y,\Theta}\bigl[ \{1(\Theta \in H) - \lGamma_Y(H)\} \, \phi(Y,\Theta) \bigr] \\
& \qquad + \lprob_{Y,\Theta}\bigl[ \{\lGamma_Y(H) - \lPi_Y(H)\} \, \phi(Y,\Theta) \bigr] \\
& \qquad + \lprob_{Y,\Theta}\bigl[ \{\lPi_Y(H) - \beta\} \, \phi(Y,\Theta) \bigr]. 
\end{align*}
Now the goal is to show that all three terms in the lower bound above are non-negative.  First, by definition of $\phi$, it follows immediately that 
\[ \lprob_{Y,\Theta}\bigl[ \{\lPi_Y(H) - \beta\} \, \phi(Y,\Theta) \bigr] \geq 0, \]
which takes care of the last/third term.  Second, since $\phi$ is $\mathscr{B}$-measurable, i.e., constant on $y$-slices in $\YY \times \TT$, it follows from Property~8 in Section~6.3.5 in \citet{walley1991} that 
\[ \lprob_{Y,\Theta}\bigl[ \{1(\Theta \in H) - \lGamma_Y(H)\} \, \phi(Y,\Theta) \bigr] \geq 0, \]
which takes care of the first term.  Third, and finally, since $\lGamma_y(H) \geq \lPi_y(H)$ for each $y$ and non-negative gambles are almost desirable, it follows that 
\[ \lprob_{Y,\Theta}\bigl[ \{\lGamma_Y(H) - \lPi_Y(H)\} \, \phi(Y,\Theta) \bigr] \geq 0. \]
Since all three terms in the lower bound are non-negative, it must be that $f_{H,\beta}$ is almost desirable, which proves the claim. 
\end{proof}

Proposition~\ref{prop:suf} provides a sufficient condition for invulnerability; I don't know yet if there are other IMs that are invulnerable too.  Regardless, this establishes an apparently new property satisfied by the generalized Bayes IM, and draws an apparently previously unknown path between Fisher and imprecise probability through the underworld.  

The only downside is that the generalized Bayes IM tends to be quite conservative.  For example, in the extreme case with a vacuous prior for $\Theta$, the generalized Bayes rule returns a vacuous posterior.  It's exactly this conservatism that ensures its coherence and invulnerability.  But even Walley wasn't fully committed to generalized Bayes:
\begin{quote}
{\em One difficulty in using generalized Bayes rule is that the updated previsions it defines may be highly imprecise... This suggests that generalized Bayes rule should be regarded as one of many possible strategies for updating beliefs. It is the only reasonable strategy that uses just the unconditional assessments, but other strategies, which use other assessments, are equally valid and may lead to greater precision.} \citep[][p.~286]{walley1991} 
\end{quote}
In a similar vein, I propose seeking a balance of the behavioral--statistical reliability properties, and this apparently requires a relaxation of the invulnerability constraint.  This begs the two-part question: a relaxation how and in which direction?

\section{Validity}
\label{S:valid}

Taking a step back, recall that the statistician's basic goal is to offer ``reliable'' inference.  Invulnerability is a special type of reliability, achieved by the generalized Bayes IM and perhaps others, but may not be the only meaningful such notion.  What about more basic notions of frequentist-style error rate control?  My previous work along these lines was largely motivated by the following remark:
\begin{quote}
{\em Even if an empirical frequency-based view of probability is not used directly as a basis for inference, it is unacceptable if a procedure\ldots of representing uncertain knowledge would, if used repeatedly, give systematically misleading conclusions}.  \citep{reid.cox.2014}
\end{quote}
That is, since the statistician's IM is a ``procedure of representing uncertain knowledge,'' it's ``unacceptable'' if there's no way to control the error at which wrong inferences are drawn.  The following {\em validity} notion from \citet{martin.partial} makes this idea precise. 

\begin{defn}
\label{def:valid}
Relative to the posited model $(\lprob_{Y,\Theta}, \uprob_{Y,\Theta})$, the statistician's IM $y \mapsto (\lPi_y, \uPi_y)$ for inference on $\Theta$ is {\em valid} if 
\begin{equation}
\label{eq:valid}
\uprob_{Y,\Theta}\{ \lPi_Y(H) > 1-\alpha, \, \Theta \not\in H\} \leq \alpha, \quad \text{for all $H \subseteq \TT$ and all $\alpha \in [0,1]$}, 
\end{equation}
\end{defn}

The interpretation of this result is as follows.  If one is reasoning with data based on the magnitudes of the IM's lower and upper probability output, as is recommended, then there's a risk of making erroneous inference if 
\[ \text{$\lPi_Y(H)$ is large and $\Theta \not\in H$}. \]
There's a similar risk of error if $\uPi_Y(H)$ is small and $\Theta \in H$, but this is handled via the conjugacy in the IM's output.  To avoid Reid and Cox's ``systematically misleading conclusions,'' the validity condition ensures that the probability of the aforementioned risk-creating event be controlled.  That's exactly what \eqref{eq:valid} does.  It's important that ``$\alpha$'' appears both inside and outside the probability statement, so that the threshold used to determine if ``$\lPi_Y(H)$ is large'' is explicitly linked to error rate control achieved.  

The notion of validity in Definition~\ref{def:valid} generalizes that condition with the same name advocated for in, e.g., \citet{imbasics, imbook}.  This previous work focused on the case of a precise model $\prob_{Y|\theta}$ for $(Y \mid \Theta=\theta)$ with a vacuous prior for $\Theta$.  For such a model, the condition in \eqref{eq:valid} specializes to 
\begin{equation}
\label{eq:valid.vac}
\sup_{\theta \not\in H} \prob_{Y|\theta}\{ \lPi_Y(H) > 1-\alpha \} \leq \alpha, \quad \text{for all $H \subseteq \TT$ and all $\alpha \in [0,1]$}, 
\end{equation}
which is the same as that in \citet[][Sec.~3.3]{imbasics}.  What distinguishes the new version of validity from the old is that the latter can accommodate more flexibility in the model, allowing for error rate control with respect to more general priors. 

Although the validity condition has a strong statistical reliability flavor, it's worth asking---in the spirit of striking a balance---if validity has any behavioral reliability implications as well.  Along these lines, the next result says that {\em validity implies no sure-loss}.  More specifically, if one interprets the IM as a rule for updating the prior assessments,
\[ \lprob_\Theta(H) := \lprob_{Y,\Theta}(\YY \times H) \quad \text{and} \quad \uprob_\Theta(H) := \uprob_{Y,\Theta}(\YY \times H), \quad H \subseteq \TT, \]
to a data-dependent ``posterior'' assessment $(\lPi_y, \uPi_y)$ given data $Y=y$, then one would hope that there's no sure-loss, i.e., the IM should satisfy 
\begin{equation}
\label{eq:no.sure.loss}
\inf_{y \in \YY} \lPi_y(H) \leq \uprob_\Theta(H), \quad \text{for all $H \subseteq \TT$}. 
\end{equation}
In words, if \eqref{eq:no.sure.loss} fails, so that the above relation is ``$>$'' for some $H$, then the scrutinizer could buy the gamble \$$1(\Theta \in H)$ from the statistician for \$$\uprob_\Theta(H)$ before $Y$ is observed, and then sell it back to the statistician for \$$\lPi_y(H)$ after $Y=y$ is observed and, regardless of the value $y$, will make a profit of 
\[ \lPi_y(H) - \uprob_\Theta(H) > 0 \quad \text{dollars}. \]
A specific behavioral implication of validity is in Proposition~\ref{prop:no.sure.loss} below, which confirms that if the IM is valid, then sure-loss in the sense described here is avoided. 

\begin{prop}[validity implies no sure-loss]
\label{prop:no.sure.loss}
If the IM is valid in the sense of Definition~\ref{def:valid}, then there's no sure-loss relative to the prior assessments, i.e., \eqref{eq:no.sure.loss} holds.  
\end{prop}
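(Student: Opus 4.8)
The plan is to turn the validity inequality~\eqref{eq:valid} into a lower bound on the marginal prior lower probability $\lprob_\Theta(H) = \lprob_{Y,\Theta}(\YY \times H)$ and then to close the gap with a single appeal to coherence. Fix $H \subseteq \TT$ and write $c = \inf_{y \in \YY} \lPi_y(H)$. Coherence of the joint model gives $\lprob_{Y,\Theta}(1_{\YY \times H}) \le \uprob_{Y,\Theta}(1_{\YY \times H})$, i.e.\ $\lprob_\Theta(H) \le \uprob_\Theta(H)$, and also $\uprob_\Theta(H) \ge 0$; hence there is nothing to prove when $c = 0$, and I would assume henceforth that $c > 0$.

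First I would pick the validity threshold so that the event ``$\lPi_Y(H)$ is large'' becomes the whole space. For any $\alpha \in [0,1]$ with $1 - \alpha < c$, every $y \in \YY$ has $\lPi_y(H) \ge c > 1 - \alpha$, so $\{\lPi_Y(H) > 1 - \alpha\}$ is all of $\YY \times \TT$ and therefore $\{\lPi_Y(H) > 1 - \alpha,\ \Theta \not\in H\}$ equals the rectangle $\YY \times H^c$ \emph{exactly}. Substituting into~\eqref{eq:valid} then yields $\uprob_{Y,\Theta}(\YY \times H^c) \le \alpha$.

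Next I would translate this back through conjugacy: since the complement of $\YY \times H^c$ inside $\YY \times \TT$ is $\YY \times H$, we get $\uprob_{Y,\Theta}(\YY \times H^c) = 1 - \lprob_{Y,\Theta}(\YY \times H) = 1 - \lprob_\Theta(H)$, so the bound above reads $\lprob_\Theta(H) \ge 1 - \alpha$. Because this holds for every admissible $\alpha$, letting $\alpha \downarrow 1 - c$ gives $\lprob_\Theta(H) \ge c = \inf_{y \in \YY} \lPi_y(H)$. One last use of coherence, $\uprob_\Theta(H) \ge \lprob_\Theta(H)$, delivers~\eqref{eq:no.sure.loss}; indeed the argument proves the slightly stronger statement with $\uprob_\Theta(H)$ replaced by $\lprob_\Theta(H)$.

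I do not expect a real obstacle here; the argument is short. The two points that need attention are: (i) keeping the inequality $1 - \alpha < c$ \emph{strict}, which is exactly what makes $\{\lPi_Y(H) > 1-\alpha,\ \Theta \not\in H\}$ coincide with the full rectangle rather than a possibly smaller set (for which~\eqref{eq:valid} would only bound the wrong quantity); and (ii) the passage to the limit $\alpha \downarrow 1 - c$, which is legitimate precisely because the $c = 0$ case was handled separately, leaving a nonempty range of admissible $\alpha$. The whole thing can equivalently be run as a contradiction argument---assume $\inf_y \lPi_y(H) > \uprob_\Theta(H)$, derive $\lprob_\Theta(H) > \uprob_\Theta(H)$, and invoke coherence---but the direct version seems cleanest.
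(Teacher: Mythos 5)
Your proof is correct and is essentially the paper's argument run in the direct rather than contrapositive direction: both hinge on choosing $\alpha$ so that $\{\lPi_Y(H) > 1-\alpha,\ \Theta\not\in H\}$ becomes exactly $\YY\times H^c$, then applying conjugacy ($\uprob_\Theta(H^c)=1-\lprob_\Theta(H)$) and the coherence inequality $\lprob_\Theta(H)\le\uprob_\Theta(H)$. Your limiting step $\alpha\downarrow 1-c$ plays the role of the paper's explicit $\eps$, and as you note it even yields the marginally stronger bound $\inf_y\lPi_y(H)\le\lprob_\Theta(H)$.
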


\begin{proof}
Suppose that there's a sure-loss.  That is, there exists an $H \subseteq \TT$ and $\eps > 0$ such that \eqref{eq:no.sure.loss} fails, i.e., $\lPi_y(H) > \uprob_\Theta(H) + \eps$ for all $y$.  Take $\alpha = 1-\uprob_\Theta(H)-\eps$, so that 
\begin{align*}
\uprob_{Y,\Theta}\{\lPi_Y(H) > 1 - \alpha, \, \Theta \not\in H\} & = \uprob_{Y,\Theta}\{\lPi_Y(H) > \uprob_\Theta(H) + \eps, \, \Theta \not\in H\} \\
& = \uprob_\Theta(H^c) \\
& = 1 - \lprob_\Theta(H) \\
& \geq 1 - \uprob_\Theta(H) \\
& = \alpha + \eps \\
& > \alpha,
\end{align*}
where the second equality follows since $\lPi_y(H)$ is lower-bounded by $\uprob_\Theta(H) + \eps$ uniformly in $y$. Therefore, validity \eqref{eq:valid} fails, and the claim follows by contraposition. 
\end{proof}

The next result connects validity to invulnerability.  In contrast with Proposition~\ref{prop:suf}, I conclude here that validity is necessary for invulnerability.  That is, if the statistician's IM isn't valid, then he's exposed to potential loss by a clever/lucky scrutinizer. 

\begin{prop}[invulnerability implies validity]
\label{prop:nec}
Given the model $\lprob_{Y,\Theta}$, if the statistician's IM $y \mapsto (\lPi_y,\uPi_y)$ for $\Theta$ is invulnerable in the sense of Definition~\ref{def:iv}, then it's also valid in the sense of Definition~\ref{def:valid}. 
\end{prop}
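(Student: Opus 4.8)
The plan is to prove the contrapositive: assuming validity fails, I will exhibit a gamble of the form \eqref{eq:W} that is not almost desirable with respect to $\lprob_{Y,\Theta}$, thereby contradicting invulnerability. So suppose \eqref{eq:valid} fails: there exist $H \subseteq \TT$ and $\alpha \in [0,1]$ with
\[ \uprob_{Y,\Theta}\{ \lPi_Y(H) > 1-\alpha, \, \Theta \not\in H \} > \alpha. \]
The natural choice is to set $\beta = 1-\alpha$ and look at the gamble $f^{H^c, \beta}$ — or, more directly, to work with the ``bad event'' gamble for $H$ itself and massage it into the required form. Let me write $E = \{\lPi_Y(H) > 1-\alpha\}$ for the (implicitly $\TT$-cylindrical) event on which the statistician overcommits to $H$; the hypothesis says $\uprob_{Y,\Theta}(E \cap \{\Theta \not\in H\}) > \alpha$.

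The key computational step is to relate $\lprob_{Y,\Theta}(f^{H,\beta})$ to this upper probability. With $\beta = 1-\alpha$, the gamble is $f^{H,\beta}(y,\theta) = \{1(\theta \in H) - (1-\alpha)\}\,1_E(y)$ (using that $1\{\lPi_y(H) > \beta, \theta \in \TT\} = 1_E(y)$). Splitting $1(\theta \in H) = 1 - 1(\theta \not\in H)$ gives $f^{H,\beta} = \alpha\,1_E - 1(\Theta \not\in H)\,1_E$. By conjugacy and subadditivity of $\lprob$ (both consequences of coherence, available from the excerpt's discussion following \eqref{eq:coherent}),
\[ \lprob_{Y,\Theta}(f^{H,\beta}) = \lprob_{Y,\Theta}\bigl( \alpha\,1_E - 1(\Theta\not\in H)\,1_E \bigr) \leq \alpha\,\uprob_{Y,\Theta}(E) - \lprob_{Y,\Theta}\bigl(1(\Theta\not\in H)\,1_E\bigr) \cdot 0 \ldots \]
— here I need to be a little careful, because I actually want an \emph{upper} bound on $\lprob_{Y,\Theta}(f^{H,\beta})$ that is negative, and the clean inequality is $\lprob(g) \le \uprob(g)$ together with the fact that $\uprob_{Y,\Theta}(\alpha 1_E - 1(\Theta\not\in H)1_E)$ is at most $\alpha - \lprob_{Y,\Theta}(1(\Theta\not\in H)1_E) \le \alpha - \lprob_{Y,\Theta}(\text{something}) $. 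The cleanest route is: $1_E - 1(\Theta\not\in H)1_E = 1_E\,1(\Theta\in H) \ge 0$, so $-1(\Theta\not\in H)1_E \le -1_E + 1_E 1(\Theta\in H)$ — this is getting circular. Let me instead just bound $\lprob_{Y,\Theta}(f^{H,\beta}) \le \uprob_{Y,\Theta}(f^{H,\beta})$ and then estimate $\uprob_{Y,\Theta}(f^{H,\beta})$ directly: on the set $E \cap \{\Theta \not\in H\}$ the integrand equals $-(1-\alpha) \cdot ... $ wait, $1(\theta\in H) - (1-\alpha) = -(1-\alpha) = \alpha - 1$ there; on $E \cap \{\Theta \in H\}$ it equals $1 - (1-\alpha) = \alpha$; off $E$ it is $0$. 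So $f^{H,\beta} = \alpha\,1_{E\cap\{\Theta\in H\}} + (\alpha-1)\,1_{E\cap\{\Theta\not\in H\}} \le \alpha\,1_E - 1_{E\cap\{\Theta\not\in H\}}$. Hence $\uprob_{Y,\Theta}(f^{H,\beta}) \le \alpha - \lprob_{Y,\Theta}(1_{E\cap\{\Theta\not\in H\}}) = \alpha - (1 - \uprob_{Y,\Theta}((E\cap\{\Theta\not\in H\})^c))$, and I want this $< 0$, i.e. $\uprob_{Y,\Theta}((E\cap\{\Theta\not\in H\})^c) < 1 - \alpha$. That is \emph{not} the same as $\uprob_{Y,\Theta}(E\cap\{\Theta\not\in H\}) > \alpha$ unless $\uprob$ is additive. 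This is the main obstacle.

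The honest fix is to split off a supremum over $y$. The correct elementary decomposition is $f^{H,\beta} = \alpha\,1_E - 1(\Theta\not\in H)\,1_E$ exactly, and I should use that $\lprob_{Y,\Theta}$ is superadditive only on the favorable side; instead I will bound $\lprob_{Y,\Theta}(\alpha 1_E - 1(\Theta\not\in H)1_E)$ above by noticing $\alpha 1_E - 1(\Theta\not\in H)1_E \le \alpha 1_E - 1(\Theta\not\in H)1_E$ and applying $\lprob(g) \le \uprob(g)$ followed by subadditivity of $\uprob$: $\uprob_{Y,\Theta}(g_1 + g_2) \le \uprob_{Y,\Theta}(g_1) + \uprob_{Y,\Theta}(g_2)$ with $g_1 = \alpha 1_E$, $g_2 = -1(\Theta\not\in H)1_E$, giving $\uprob_{Y,\Theta}(f^{H,\beta}) \le \alpha\,\uprob_{Y,\Theta}(E) - \lprob_{Y,\Theta}(1_{E\cap\{\Theta\not\in H\}})$. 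Now use $\uprob_{Y,\Theta}(E) \le 1$ and — crucially — the hypothesis in the form $\lprob_{Y,\Theta}(1_{E\cap\{\Theta\not\in H\}}) \ge \uprob_{Y,\Theta}(E\cap\{\Theta\not\in H\}) - (\text{gap})$? No. I think the genuinely right move, and the one I would commit to in the writeup, is to first pass to a precise dominating prevision: since $\uprob_{Y,\Theta}(E\cap\{\Theta\not\in H\}) > \alpha$, there is (by the lower-envelope representation, \citet[][Prop.~3.3.3]{walley1991}) a linear prevision $\prob_{Y,\Theta} \in \cred(\lprob_{Y,\Theta})$ with $\prob_{Y,\Theta}(E\cap\{\Theta\not\in H\}) > \alpha$; compute $\prob_{Y,\Theta}(f^{H,\beta})$ using finite additivity — it equals $\alpha\,\prob_{Y,\Theta}(E) - \prob_{Y,\Theta}(E\cap\{\Theta\not\in H\}) \le \alpha - \prob_{Y,\Theta}(E\cap\{\Theta\not\in H\}) < 0$ — and then $\lprob_{Y,\Theta}(f^{H,\beta}) \le \prob_{Y,\Theta}(f^{H,\beta}) < 0$, so $f^{H,\beta}$ is not almost desirable and the IM is vulnerable. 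The main obstacle is precisely this additivity gap, and reducing to a dominating linear prevision is the clean way around it; the remaining steps (the indicator bookkeeping above and invoking the lower-envelope theorem) are routine.
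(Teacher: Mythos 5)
Your final committed argument is correct, and it reaches the conclusion by a genuinely different mechanism than the paper at the crucial step. Both you and the paper argue by contraposition with the same choice $\beta=1-\alpha$ and the same gamble $f^{H,\beta}$, and your decomposition $f^{H,\beta}=\alpha\,1_{E\cap\{\Theta\in H\}}+(\alpha-1)\,1_{E\cap\{\Theta\notin H\}}$ is exactly right. Where you resolve the ``additivity gap'' by passing to a dominating linear prevision --- picking $\prob_{Y,\Theta}\in\cred(\lprob_{Y,\Theta})$ with $\prob_{Y,\Theta}(E\cap\{\Theta\notin H\})>\alpha$ via the lower/upper envelope theorem and computing $\prob_{Y,\Theta}(f^{H,\beta})=\alpha\,\prob_{Y,\Theta}(E)-\prob_{Y,\Theta}(E\cap\{\Theta\notin H\})<0$, then using $\lprob_{Y,\Theta}(f^{H,\beta})\leq\prob_{Y,\Theta}(f^{H,\beta})$ --- the paper stays entirely inside the lower-prevision calculus: it dominates $f^{H,\beta}$ pointwise by $\check f^{H,\beta}=\alpha-1\{\lPi_y(H)>1-\alpha,\,\theta\notin H\}$ (i.e., it replaces $\alpha\,1_E$ by the constant $\alpha$), so that constant-translation plus conjugacy give the exact identity $\lprob_{Y,\Theta}(\check f^{H,\beta})=\alpha-\uprob_{Y,\Theta}\{\lPi_Y(H)>1-\alpha,\Theta\notin H\}<0$, and monotonicity then forces $\lprob_{Y,\Theta}(f^{H,\beta})<0$. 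So the obstacle you correctly identified (superadditivity of $\lprob$ does not convert $\uprob(\text{bad event})>\alpha$ into a negative bound directly) is circumvented in the paper by a cruder pointwise bound rather than by additivity; your route instead buys an exact computation at the price of invoking the envelope representation (Walley, Prop.~3.3.3), which is legitimate here since the paper assumes $\lprob_{Y,\Theta}$ coherent throughout. The paper's version is marginally more elementary (only monotonicity, conjugacy, and translation-invariance of coherent previsions are used), while yours makes the sign of the expectation transparent; both are complete proofs, and you could tighten your write-up by deleting the exploratory false starts and keeping only the credal-set argument.
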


\begin{proof}
Suppose that \eqref{eq:valid} fails, and let $(H,\alpha)$ be such that 
\[ \uprob_{Y,\Theta}\{ \lPi_Y(H) > 1-\alpha, \Theta \not\in H\} > \alpha. \]
Set $\beta=1-\alpha$, and consider the gamble $f^{H,\beta}$ as defined above; recall that, by definition, $f_y^{H,\beta}$ is acceptable to the statistician for any given $y$.  Rewrite $f_y^{H\beta}$ as 
\begin{align*}
f_y^{H,\beta}(\theta) & = \{ 1(\theta \in H) - \beta \} \, 1\{ \lPi_y(H) > \beta \} \\
& = \{ \alpha - 1(\theta \not\in H) \} \, 1\{ \lPi_y(H) > 1-\alpha \}.
\end{align*}
Define a new gamble 
\[ \check f_y^{H,\beta}(\theta) = \alpha - 1\{ \lPi_y(H) > 1-\alpha, \theta \not\in H \}, \]
and notice that $f_y^{H,\beta}(\theta) \leq \check f_y^{H,\beta}(\theta)$.  Since the former gamble is acceptable for each $y$, it must be that the latter gamble is too.  But it's also clear that 
\[ \lprob_{Y,\Theta}( \check f^{H,\beta} ) = \alpha - \uprob_{Y,\Theta}\{ \lPi_Y(H) \geq 1-\alpha, \, \Theta \not\in H\}, \]
and the right-hand side is negative for this choice of $(H,\alpha)$.  So, if validity fails, then the IM is vulnerable and, therefore, by contraposition, validity implies no sure-loss.  
\end{proof}

Proposition~\ref{prop:nec} has close connections to the {\em false confidence theorem} in \citet{balch.martin.ferson.2017}.  There we showed that, in the case of vacuous prior information about $\Theta$, if the IM $y \mapsto (\lPi_y, \uPi_y)$ is precise, i.e., if $\lPi_y = \uPi_y$, then it fails to be valid.  Consequently, for any threshold $\alpha$, there exists $H \subseteq \TT$ such that 
\[ \prob_{Y|\theta}\{ \lPi_Y(H) > 1-\alpha \} > \alpha, \quad \text{for some $\theta \not\in H$}. \]
That is, there exists a false hypothesis that the IM tends to assign (relatively) high confidence/support/belief to, hence the name ``false confidence.''  And this has nothing to do with, say, choosing a ``wrong prior'' in a Bayesian analysis---it's a consequence of unjustified precision, of pushing the likelihood too far (as I explain in \citealt{martin.basu}), that applies to any default-prior Bayes or fiducial-like solution. What's new and interesting/important here is the behavioral connection: in the vacuous prior case, a precise IM is invalid, hence vulnerable, and hence a statistician adopting such an IM is at risk of losing money to a scrutinizer.  Having exposed the behavioral/monetary consequences of unjustified imprecision, I hope that those who dismissed the false confidence phenomenon \citep[e.g.,][]{prsa.conf} can now more clearly see the potential consequences.  

This, of course, isn't the first such criticism of unjustifiably precise statistical solutions when the prior information is vacuous.  Chapter~7.4 in \citet{walley1991} has a number of such results; see \citet{seidenfeld1982} and \citet{caprio.seidenfeld.isipta23} for related investigations into fiducial and generalized fiducial inference, respectively; and see, also, \citet{martin.partial, martin.isipta2023}.  What's new and interesting is the connection made here to the seemingly entirely frequentistly-motivated validity property.  

Proposition~\ref{prop:nec} isn't specifically focused on the vacuous prior case, so it also offers a particular generalization of the original false confidence theorem's take-away message.  That is, if the statistician's IM fails to be valid even in the more general sense of Definition~\ref{def:valid} above \citep[from][]{martin.partial}, which allows for partial prior information on $\Theta$, then there's vulnerability and a corresponding risk of monetary loss to a savvy scrutinizer.  This underscores how much care is needed in both the modeling---including prior elicitation---and IM construction steps in a statistical analysis.  

Finally, in light of the above observations, the validity property emerges as perhaps the ``correct'' direction to pursue as it concerns striking a balance between behavioral and statistical reliability.  Indeed, the above results reveal the following relationships:
\begin{align*}
\text{generalized Bayes IM} & \in \{\text{invulnerable IMs}\} \\
& \subseteq \{\text{valid IMs}\} \\
& \subseteq \{\text{no-sure-loss IMs}\}. 
\end{align*}
(By the way, \citet{martin.partial}, Corollary~3, established that the generalized Bayes IM is valid, but Propositions~\ref{prop:suf} and \ref{prop:nec} here together offer a new proof.)  So, if generalized Bayes and invulnerability more generally are taken to be worthy considerations, but the concern is conservatism due to an over-emphasis on behavioral reliability, then I think it makes sense to seek a better balance by relaxing the requirements, by expanding the class of IMs under consideration.  Since one wouldn't want to go so far as to allow IMs that suffer from sure-loss, restricting attention to valid IMs, which have been previously investigated, seems like a very reasonable strategy.

\section{Striking a balance?}
\label{S:balance}

\subsection{Valid IM construction}

Following the suggestion at the end of the previous section, the present goal is to construct an IM that's valid and different from generalized Bayes.  Previous work, e.g., in the \citet{imbook} monograph, focuses on the construction of valid IMs different from generalized Bayes, but there's a subtle point: those previous efforts all focused on the vacuous prior case that's by far the most common in the statistical literature.  Those IMs that are valid relative to a vacuous prior are also valid relative to a more precise partial prior \citep[][Proposition~4]{martin.partial}, but they actually satisfy a stronger form of validity that's not obviously linked to invulnerability via Proposition~\ref{prop:nec}; see Section~\ref{S:discuss} below.  For now, I want to focus on the present goal, namely, the construction of a partial prior IM that's valid in the sense of Definition~\ref{def:valid} above.  

What I'm going to present here is one natural construction of a valid, partial prior IM.  I make no claims that this is the only such construction or the best; further investigation is needed to properly formulate and hopefully answer the question of ``best.'' 

As mentioned above, it's now well-known how to construct an IM that's valid relative to a vacuous prior.  So, can one suitably combine that vacuous-prior IM with the available partial prior information?  This is similar to how Bayesian inference combines the data-driven likelihood function with a prior density, or how frequentists penalize their objective functions to encourage certain structure in their point estimates.  Following the ideas in \citet{martin.partial}, I'll consider using {\em Dempster's rule} to combine the vacuous-prior IM and partial prior information \citep[e.g.,][]{shafer2016.rule, shafer1976, dempster1967, dempster2008}.  

Start with a classical/precise statistical model, $\prob_{Y|\theta}$, for $(Y \mid \Theta=\theta)$; the (partial) prior for $\Theta$ will be discussed below.  Express this precise model $\prob_{Y|\theta}$ via an {\em association} or a ``data-generating equation'' 
\begin{equation}
\label{eq:assoc.new}
Y = a(\theta, U), 
\end{equation}
where $U$ has a known distribution $\prob_U$.  This form is familiar in the context of simulating data from a known distribution.  That is, we can generate data $Y$ from distribution $\prob_{Y|\theta}$, with known $\theta$, by first generating $U$ from distribution $\prob_U$ and then plugging the corresponding $(\theta,U)$ into the expression \eqref{eq:assoc.new} to get $Y$.  Any model $\prob_{Y|\theta}$ can be represented by an expression like \eqref{eq:assoc.new}, but that representation is not unique.  This setup aligns with Fisher's fiducial approach and the generalizations by \citet{fraser1968}, \citet{dempster2008}, and \citet{hannig.review}.  The IM construction in \citet{imbook} differs from these others through its use of a random set $\U \sim \prob_\U$ for quantifying uncertainty about the unobservable auxiliary variable $U \sim \prob_U$ in \eqref{eq:assoc.new}.  Define $\U$ such that the $y$-dependent random set on the parameter space $\TT$, given by
\[ \TT_y(\U) = \bigcup_{u \in \U} \{\theta: y = a(\theta, u)\}, \quad \U \sim \prob_\U, \]
is non-empty for all $y$.  Then define the IM's lower and upper probabilities for quantifying uncertainty about $\Theta$, given $Y=y$, as 
\begin{align*}
\lPi_y^\text{vac}(H) & = \prob_\U\{ \TT_y(\U) \subseteq H \} \\
\uPi_y^\text{vac}(H) & = \prob_\U\{ \TT_y(\U) \cap H \neq \varnothing\}, \quad A \subseteq \TT. 
\end{align*}
The $\lPi_y^\text{vac}$ is a belief function and, if the random set $\U$ is nested, i.e., for any pair of realizations one is a subset of the other, then $\lPi_y^\text{vac}$ is {\em consonant} \citep[e.g.,][Ch.~10]{shafer1976}.  I mention consonance because this structure appears to be fundamental as it concerns the IM's efficiency; see Theorem~4.3 in \citet{imbook} and Section~\ref{S:discuss} below.  Importantly, under very mild conditions on the random set $\U$, it follows that the IM constructed above is valid relative to a vacuous prior for $\Theta$, i.e., \eqref{eq:valid.vac} holds.  For conditions on $\U$ and a proof of the previous claim, see \citet[][Sec.~3]{imbasics}. 

The IM construction just described would be appropriate if prior information about $\Theta$ were vacuous.  If prior information is not vacuous, then it wouldn't be unreasonable to encode this too as a belief function \citep[e.g.,][]{cuzzolin.book, denoeux1999} and combine it with the $y$-dependent belief function above using Dempster's rule.  A convenient way to express this prior belief function is by introducing a (non-empty) random set $\T \sim \prob_\T$ on $\TT$, independent of $U$ and $\U$, so that prior uncertainty can be quantified as 
\[ \lprob_\Theta(H) = \prob_\T\{ \T \subseteq H\} \quad \text{and} \quad \uprob_\Theta(H) = \prob_\T\{ \T \cap H \neq \varnothing \}, \quad H \subseteq \TT. \]
Again, I make no claims that this would be an appropriate model for quantifying uncertainty about $\Theta$ in every case, but it is quite flexible compared to the all-or-nothing vacuous or precise approaches that statisticians are currently taking.  

Now, given data $Y=y$, we can combine the available prior information, encoded in $\T$, with the output of the IM construction above, expressed in terms of the random set $\Theta_y(\U)$, using Dempster's rule to get some new IM output, namely, 
\begin{equation}
\label{eq:dempster}
\lPi_y(H) = 
\frac{\prob_{\T \times \U}\{ \varnothing \neq \TT_y(\U) \cap \T \subseteq H\}}{\prob_{\T \times \U}\{\TT_y(\U) \cap \T \neq \varnothing\}}, \quad H \subseteq \TT, 
\end{equation}
where $\prob_{\T \times \U}$ is the joint distribution for the (independent) random sets $\T$ and $\U$.  The corresponding upper probability/plausibility is 
\begin{equation}
\label{eq:dempster.pl}
\uPi_y(H) = \frac{\prob_{\T \times \U}\{ \TT_y(\U) \cap \T \cap H \neq \varnothing \}}{\prob_{\T \times \U}\{\TT_y(\U) \cap \T \neq \varnothing\}}, \quad H \subseteq \TT.
\end{equation}
Note that this IM is different from both generalized Bayes and Dempster's solutions. The difference from generalized Bayes is probably obvious, but the difference from Dempster's solution stems from introducing the random set $\U$ that's necessary to achieve the vacuous-prior validity in \eqref{eq:valid.vac} which, in turn, appears to be necessary for proving the more general validity claim below.  There's much more that could be said about the proposed IM in \eqref{eq:dempster}, but my focus here is just on proving its validity.  

Towards this, I'm going to make two assumptions concerning the mathematical structure of the available prior information.  
\begin{itemize}
\item[A1.] Assume that the support of $\T$ is finite, i.e., $\T$ has only finitely many focal elements.  This implies that $\prob_\T$ can be described by a mass function $m$:
\[ \uprob_\Theta(H) = \prob_\T( \T \cap H \neq \varnothing ) = \sum_{T: T \cap H \neq \varnothing} m(T), \quad H \subseteq \TT. \]
This is just for mathematical simplicity, the validity result shouldn't hinge on countable versus uncountable support for $\T$. 
\vspace{-2mm}
\item[A2.] Assume that $\T$ is nested---that is, for any $T,T'$ in the support of $\T$, either $T \subseteq T'$ or $T \supseteq T'$.  This implies that the prior belief function $\lprob_\Theta$ is consonant.  Admittedly, this is a non-trivial restriction, but there are many practical cases where this would be appropriate.  Like above, I wouldn't expect that consonant versus non-consonant would be relevant to the validity result, but I don't currently know how to prove the claim without consonance.  
\end{itemize} 

\begin{prop}
\label{prop:dempster}
Let the model $(\lprob_{Y,\Theta}, \uprob_{Y,\Theta})$ be determined by the conditional $\prob_{Y|\theta}$ model for $(Y \mid \Theta=\theta)$ and the partial prior $(\lprob_\Theta, \uprob_\Theta)$ for $\Theta$.  Under Assumptions~A1--A2, the Dempster's rule-based IM construction in \eqref{eq:dempster} is valid in the sense of Definition~\ref{def:valid}. 
\end{prop}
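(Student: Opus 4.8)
The plan is to reduce the validity condition \eqref{eq:valid}, via the algebra of Dempster's rule, to the vacuous-prior validity \eqref{eq:valid.vac} that the $\U$-based IM already enjoys. Fix $H\subseteq\TT$ and $\alpha\in[0,1]$, and note that the event in \eqref{eq:valid} is the product $\{y:\lPi_y(H)>1-\alpha\}\times H^c$. Since the posited joint model $(\lprob_{Y,\Theta},\uprob_{Y,\Theta})$ is the marginal extension of the precise conditional $\prob_{Y|\theta}$ by the partial prior, and $\uprob_\Theta$ is the plausibility function with mass function $m$ supported (by A1) on the nested focal elements $T_1\supseteq T_2\supseteq\cdots\supseteq T_n$ (A2), the Choquet-integral representation of $\uprob_{Y,\Theta}$ yields
\begin{equation*}
\uprob_{Y,\Theta}\{\lPi_Y(H)>1-\alpha,\ \Theta\not\in H\} \;=\; \sum_{j=1}^n m(T_j)\, \sup_{\theta\in T_j\setminus H}\prob_{Y|\theta}\{\lPi_Y(H)>1-\alpha\},
\end{equation*}
with the convention $\sup\varnothing=0$; the goal is to bound the right-hand side by $\alpha$.

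Next I would expand the combined lower probability. Dempster's rule \eqref{eq:dempster} conditions the independent product random set $\TT_y(\U)\cap\T$ on being non-empty, so A1 recasts \eqref{eq:dempster}--\eqref{eq:dempster.pl} as ratios of finite sums over the focal elements,
\begin{equation*}
\uPi_y(H^c) \;=\; \frac{\sum_{j} m(T_j)\,\uPi_y^\text{vac}(T_j\cap H^c)}{\sum_{j} m(T_j)\,\uPi_y^\text{vac}(T_j)}, \qquad \lPi_y(H)=1-\uPi_y(H^c),
\end{equation*}
which displays $\lPi_y(H)$ as a weighted average of the focal-restricted values $\lPi_y^{\langle j\rangle}(H):=1-\uPi_y^\text{vac}(T_j\cap H^c)/\uPi_y^\text{vac}(T_j)$, with weights proportional to $m(T_j)\,\uPi_y^\text{vac}(T_j)$. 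Hence $\lPi_y(H)>1-\alpha$ forces $\lPi_y^{\langle j\rangle}(H)>1-\alpha$ for at least one $j$. The key lemma I would then prove is that each $\lPi_y^{\langle j\rangle}$ is valid for the sub-model whose prior is vacuous on $T_j$: for every $\theta\in T_j\setminus H$,
\begin{equation*}
\prob_{Y|\theta}\{\lPi_Y^{\langle j\rangle}(H)>1-\alpha\} \;\le\; \prob_{Y|\theta}\{\uPi_Y^\text{vac}(T_j\cap H^c)<\alpha\} \;=\; \prob_{Y|\theta}\{\lPi_Y^\text{vac}(T_j^c\cup H)>1-\alpha\} \;\le\; \alpha,
\end{equation*}
using $\uPi_y^\text{vac}(T_j)\le1$ and the equivalence $\lPi_y^{\langle j\rangle}(H)>1-\alpha\Leftrightarrow\uPi_y^\text{vac}(T_j\cap H^c)<\alpha\,\uPi_y^\text{vac}(T_j)$ for the first step, conjugacy for the equality, and \eqref{eq:valid.vac} applied with the hypothesis $T_j^c\cup H$ in place of $H$ for the last step (note $\theta\notin T_j^c\cup H$ is precisely $\theta\in T_j\setminus H$). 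This lemma uses only the non-emptiness and set-monotonicity of $y\mapsto\TT_y(\U)$ together with \eqref{eq:valid.vac}; no consonance of the vacuous-prior IM is required.

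The step I expect to be the main obstacle is assembling these per-focal-element controls into the bound $\le\alpha$ on the weighted sum above. A plain union bound over $j$ is hopeless: any focal element $T_j\subseteq H$ has $\lPi_y^{\langle j\rangle}(H)\equiv1$, so the event ``$\exists j:\lPi_y^{\langle j\rangle}(H)>1-\alpha$'' can carry probability one; and merely replacing $\uPi_y^\text{vac}(T_j)$ by its bound $1$ inflates the estimate to $\alpha\sum_{j}m(T_j)/\{m(T_1)+\cdots+m(T_j)\}$, which exceeds $\alpha$. The fix must keep the Dempster normalization in play and exploit A2: because $T_1\supseteq\cdots\supseteq T_n$, the supremum ranges $T_j\setminus H$ shrink with $j$ and the associated events $\{\uPi_Y^\text{vac}(T_j\cap H^c)<\alpha\,\uPi_Y^\text{vac}(T_j)\}$ are monotone, so the contributions should telescope once one tracks how the denominator $\sum_j m(T_j)\,\uPi_Y^\text{vac}(T_j)$ grows in concert with the numerator across the nested focal elements---intuitively, a data value that depresses the plausibility of $T_j\cap H^c$ also depresses $\uPi_Y^\text{vac}(T_j)$, so $\uPi_Y(H^c)$ cannot be as small as the crude per-term estimate suggests. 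Carrying this out by an Abel-summation/telescoping argument over $j=1,\dots,n$, controlled at each level by \eqref{eq:valid.vac}, is where the real work lies, and it is the only place Assumption~A2 (consonance of the prior) enters.
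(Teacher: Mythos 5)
Your reductions up to the per-focal-element lemma are sound: the Choquet/marginal-extension representation of $\uprob_{Y,\Theta}$ over the focal elements, the expansion of the Dempster-combined $\uPi_y$ as a ratio of finite $m$-weighted sums, the observation that $\lPi_y(H)>1-\alpha$ forces $\lPi_y^{\langle j\rangle}(H)>1-\alpha$ for at least one $j$, and the bound $\prob_{Y|\theta}\{\lPi_Y^{\langle j\rangle}(H)>1-\alpha\}\le\alpha$ for $\theta\in T_j\setminus H$ via \eqref{eq:valid.vac} are all correct. But the proof is not complete: the step you yourself flag as ``where the real work lies''---reassembling these per-focal-element controls into a bound of $\alpha$ on the $m$-weighted sum while keeping the Dempster normalization in play---is precisely the missing content, and no telescoping or Abel-summation argument is actually supplied or verified. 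Your own discussion shows the obvious assembly devices fail (a union bound is hopeless because focal elements inside $H$ give $\lPi_y^{\langle j\rangle}(H)\equiv 1$, and crudely bounding $\uPi_Y^\text{vac}(T_j)\le 1$ overshoots $\alpha$), so this is a substantive gap, not a routine verification.

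For comparison, the paper closes the argument by a route that never needs the weighted-average decomposition. It works with the conjugate event $\{\uPi_Y(H)\le\alpha,\,\Theta\in H\}$; for a focal element $T$, monotonicity gives $\{\uPi_Y(H)\le\alpha\}\subseteq\{\uPi_Y(H\cap T)\le\alpha\}$, and---this is where A2 enters---dropping the Dempster denominator once (it is $\le 1$) and using nestedness yields the product lower bound $\uPi_y(H\cap T)\ge\uPi_y^\text{vac}(H\cap T)\,\uprob_\Theta(H)$, so smallness of the combined upper probability forces $\uPi_Y^\text{vac}(H\cap T)\le\alpha\,\uprob_\Theta(H)^{-1}$. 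Then $H$ is partitioned into the slices $H_k=H\cap(T_k\setminus T_{k-1})$; for $\theta\in H_k$ the hypothesis $H\cap T_k$ is true, so the vacuous-prior validity \eqref{eq:valid.vac} applies at the inflated level $\alpha\,\uprob_\Theta(H)^{-1}$, and integrating against any linear prevision $\prior$ dominated by $\uprob_\Theta$ gives the bound $\alpha\,\uprob_\Theta(H)^{-1}\,\prior(H)$, which is $\le\alpha$ because $\prior(H)\le\uprob_\Theta(H)$; taking the upper envelope over such $\prior$ finishes the proof. In other words, the normalization loss you were trying to control within each focal element is instead absorbed globally, at the very end, by the prior plausibility of the true-hypothesis event---that single compensation is the idea your proposal is missing, and without it (or a worked-out substitute for your telescoping step) the argument does not yet establish Proposition~\ref{prop:dempster}.
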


\begin{proof}
Take any fixed focal element $T$ of $\T$.  By monotonicity of $H \mapsto \uPi_y(H)$ for each $y$, it's easy to confirm that 
\begin{equation}
\label{eq:demp.mono}
\prob_{Y|\theta}\{ \uPi_Y(H) \leq \alpha\} \leq \prob_{Y|\theta}\{ \uPi_Y(H \cap T) \leq \alpha\}. 
\end{equation}
Next, note the following lower bound on $\uPi_Y(H \cap T)$ from \eqref{eq:dempster.pl}:
\begin{align}
\uPi_y(H \cap T) & = \frac{\prob_{\T \times \U}\{ \Theta_y(\U) \cap \T \cap (H \cap T) \neq \varnothing\}}{\prob_{\T \times \U}\{\Theta_y(\U) \cap \T \neq \varnothing\}} \notag \\
& \geq \prob_{\T \times \U}\{ \Theta_y(\U) \cap \T \cap (H \cap T) \neq \varnothing\} \notag \\
& = \sum_{T': T' \cap H \cap T \neq \varnothing} m(T') \, \prob_{\T \times \U}\{ \Theta_y(\U) \cap (T' \cap H \cap T) \neq \varnothing\} \notag \\
& = \prob_\U\{ \Theta_y(\U) \cap (H \cap T) \neq \varnothing\} \, \uprob_\Theta(H) \notag \\
& = \uPi_y^\text{vac}(H \cap T) \, \uprob_\Theta(H) \label{eq:demp.lb}
\end{align}
The first inequality follows because the denominator is a probability, hence $\leq 1$; the next equality is by the total probability formula with respect to the distribution of $\T$; the last equality follows by the assumed nestedness of $\T$, i.e., that $T'$ has non-empty intersection with $H \cap T$ if and only if $T'$ has non-empty intersection with $H$.  

Now suppose---without loss of generality---that the focal elements of $\T$ are ordered as $T_k \subset T_{k+1}$, for $k \geq 1$.  Define the expansion 
\[ H = \bigcup_{k \geq 1} \underbrace{H \cap (T_k \setminus T_{k-1})}_{H_k}, \]
where the $H_k$'s are disjoint.  Notice that $H_k \subseteq H \cap T_k$, so if $\theta \in H_k$, then the hypothesis $H \cap T_k$ is ``true'' and it follows from the vacuous-prior validity result \eqref{eq:valid.vac} that  
\begin{equation}
\label{eq:demp.vac}
\sup_{\theta \in H_k} \prob_{Y|\theta}\{ \uPi_Y^\text{vac}(H \cap T_k) \leq \rho \} \leq \min(1, \rho), \quad \text{for any $\rho > 0$}. 
\end{equation}

Finally, with a slight abuse of notation, denote the extension of the prior upper probability/plausibility function for $\Theta$ to an upper prevision (supported on a suitable set of bounded gambles on $\TT$) by the same symbol $\uprob_\Theta$.  This extension is achieved through a simple Choquet integral as shown in, e.g., \citet{denoeux.decision.2019}.  Now, take any linear prevision $\prior$ dominated by the upper prevision $\uprob_\Theta$, i.e., $\prior(\cdot) \leq \uprob_\Theta(\cdot)$.  Then
\begin{align*}
\prior\bigl[ 1(\theta \in H) \, \prob_{Y|\theta}\{ \uPi_Y(H) \leq \alpha\} \bigr]  & = \sum_k \prior\bigl[ 1(\theta \in H_k) \, \prob_{Y|\theta}\{ \uPi_Y(H) \leq \alpha\} \bigr] \\
& \leq \sum_k \prior\bigl[ 1(\theta \in H_k) \, \prob_{Y|\theta}\{ \uPi_Y(H \cap T_k) \leq \alpha\} \bigr] \\
& \leq \sum_k \prior\bigl[ 1(\theta \in H_k) \, \prob_{Y|\theta}\{ \uPi_Y^\text{vac}(H \cap T_k) \leq \alpha \uprob_\Theta(H)^{-1}\} \bigr] \\
& \leq \sum_k \min\{1, \alpha \uprob_\Theta(H)^{-1}\} \, \prior(H_k) \\
& \leq \alpha \, \uprob_\Theta(H)^{-1} \, \prior(H),
\end{align*}
where the equality is by the disjoint-union representation $H = \bigcup_k H_k$ and finite additivity, the first inequality is by monotonicity \eqref{eq:demp.mono}, the second inequality is by \eqref{eq:demp.lb}, the third inequality is by \eqref{eq:demp.vac}, and the fourth inequality by the disjoint-union representation again.  
By definition, $\prior(H) \leq \uprob_\Theta(H)$, so the upper bound is no more than $\alpha$.  Since $\uprob_{Y,\Theta}$ is coherent, it's the upper envelope of the linear previsions $\prob_{Y,\Theta}$ it dominates.  For the present model---precise likelihood, imprecise prior---that upper envelope corresponds to a supremum over all the priors $\prior$ for $\Theta$ dominated by $\uprob_\Theta$.  Therefore, 
\[ \uprob_{Y,\Theta}\{ \uPi_Y(H) \leq \alpha, \Theta \in H \} = \sup_{\prior: \prior \leq \uprob_\Theta} \prior\bigl[ 1(\Theta \in H) \, \prob_{Y|\Theta}\{ \uPi_Y(H) \leq \alpha\} \bigr], \]
and, since the upper bound is no more than $\alpha$, the validity claim follows.
\end{proof}

To recap, I'm proposing a simple IM construction based on treating the vacuous-prior IM and the partial prior as independent belief functions---determined by their own independent random sets---and combining them via Dempster's rule.  This isn't the only IM construction available, but it's a simple and reasonable idea.  Proposition~\ref{prop:dempster} establishes that, besides being ``reasonable,'' this Dempster's rule-based IM construction is valid in the sense of Definition~\ref{def:valid}, at least under certain conditions on the prior specification.

\subsection{Illustration}

To illustrate the potential balance that this alternative to the generalized Bayes IM construction, I'll consider a simple example.  Model $(Y \mid \Theta=\theta)$ as $\prob_{Y|\theta} = \nm(\theta, 1)$, a normal distribution with mean $\theta$ and variance 1.  I'll start here with a quick review of the vacuous prior IM for $\Theta$, given $Y=y$, derived in \citet{imbasics}.  Write the association \eqref{eq:assoc.new} as $Y = \theta + U$, where $U \sim \nm(0,1)$.  Model the unobserved value of $U$ by a belief function determined by the random set 
\[ \U = \{u \in \RR: |u| \leq |U^\star|\}, \quad U^\star \sim \nm(0,1), \]
a symmetric interval around the origin with random length.  Then 
\[ \TT_y(\U) = \bigcup_{u \in \U} \{\theta: y = \theta + u\} = \bigl[ y - |U^\star|, y + |U^\star| \bigr], \quad U^\star \sim \nm(0,1). \]
For a class of hypotheses $(\infty, \theta]$, $\theta \in \RR$, it's easy to verify that 
\begin{align*}
\lPi_y^\text{vac}((-\infty, \theta]) & = \prob( y + |U^\star| \leq \theta ) \\
\uPi_y^\text{vac}((-\infty, \theta]) & = \prob( y - |U^\star| \leq \theta ).
\end{align*}
Individually, these functions have the properties of a cumulative distribution function as $\theta$ varies, for fixed $y$, but of course there's an ordering that holds pointwise in $\theta$. Figure~\ref{fig:cdf} plots these lower and upper distribution functions for a few different values of $y$.

Next, suppose the statistician is ``90\% sure that $\Theta$ is no more than 7.'' This incomplete prior information can be modeled by a belief function determined by the random set $\T$ 
\[ \T = \begin{cases} (-\infty, 7] & \text{with probability 0.9} \\ \RR & \text{with probability 0.1}, \end{cases} \]
so the mass function is 
\[ m((-\infty, 7]) = 0.9 \quad \text{and} \quad m(\RR) = 0.1. \]
It's not too difficult to carry employ Dempster's rule to combine this prior with the vacuous-prior IM described above---the expressions are messy and not worth presenting here---and then find the corresponding lower and upper distributions.  Figure~\ref{fig:cdf} plots these lower and upper distribution functions for a few different values of $y$.  

\begin{figure}[t]
\begin{center}
\subfigure[Clearly compatible: $y=5$]{\scalebox{0.6}{\includegraphics{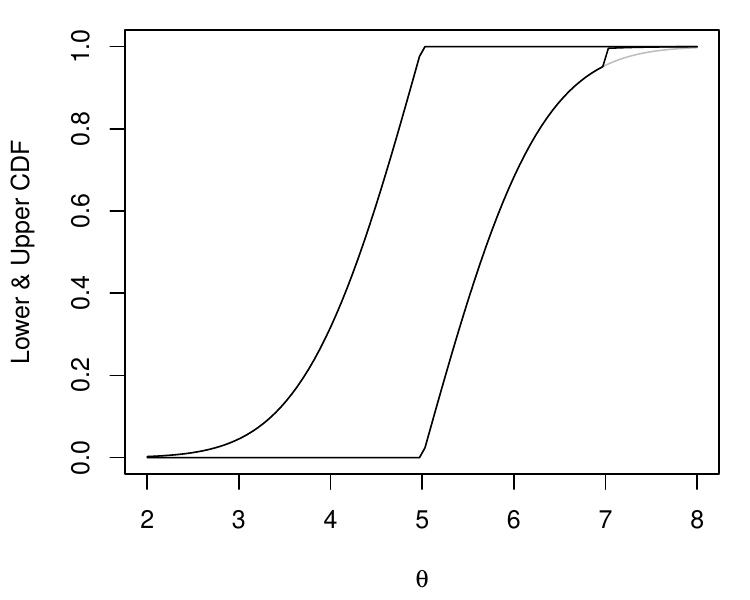}}}
\subfigure[Barely compatible: $y=6.5$]{\scalebox{0.6}{\includegraphics{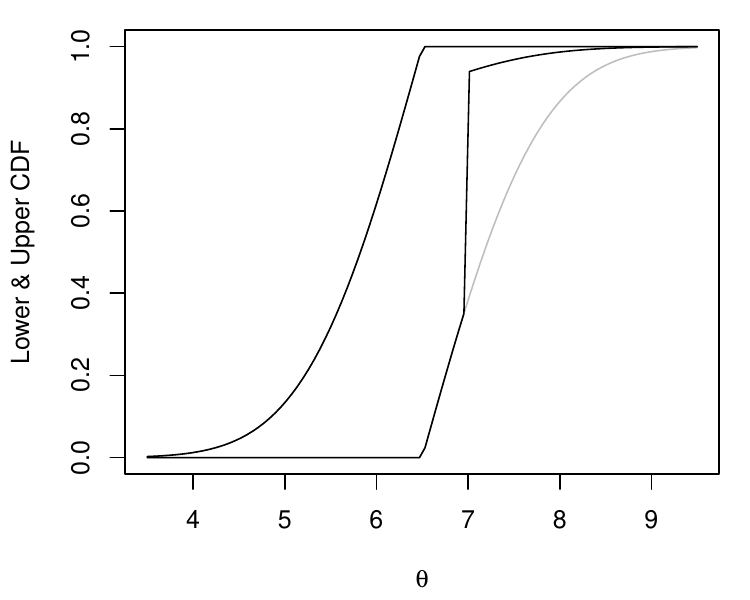}}}
\subfigure[Barely incompatible: $y=7.5$]{\scalebox{0.6}{\includegraphics{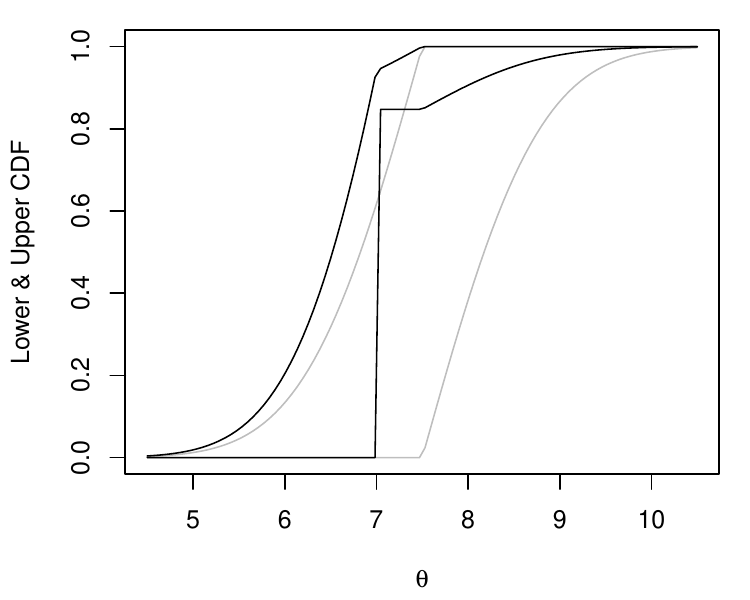}}}
\subfigure[Clearly incompatible: $y=9$]{\scalebox{0.6}{\includegraphics{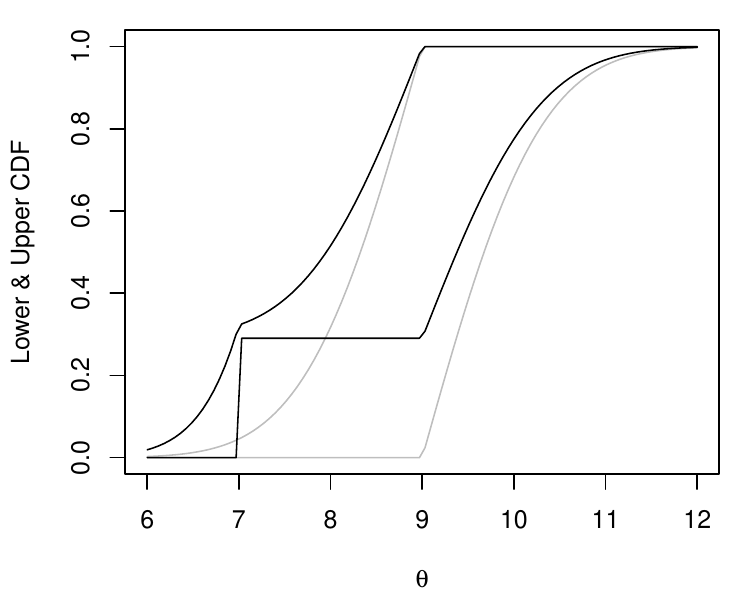}}}
\end{center}
\caption{Plots of the lower and upper distribution functions for the vacuous-prior IM (gray) and Dempster's rule-based IM (black) for four values of $y$ corresponding to four different degrees of compatibility with the partial prior information.}
\label{fig:cdf}
\end{figure}

In Figure~\ref{fig:cdf}, the $y$ values range from being clearly compatible with the prior information, barely compatible, barely incompatible, and clearly incompatible.  In the clearly-compatible case, there's virtually no difference between the two IMs' results, with the Dempster's rule-based solution being slightly more precise.  In the barely-compatible case, the increased precision is more pronounced compared to the previous case because there's more to be gained from the prior information in this boundary case.  When the data is (barely or clearly) incompatible with the prior information, the prior's effect is more significant and, moreover, the difference between the two IMs can no longer be described in terms of relative precision.  To understand the effect in the incompatible cases, it helps to think about a statistical summary, namely, the interval determined by, say, the 2.5th and 97.5th percentiles of the upper and lower distribution functions, respectively.  For the vacuous-prior IM, these intervals have length roughly 4.  In the barely-incompatible case, the incompatibility can easily be chalked up to sampling variation, so the prior steps in and tightens up the aforementioned interval to have length less than 3.5.  In the clearly-incompatible case, on the other hand, the discrepancy can't easily be explained by sampling variability, so, thanks to the validity property (Proposition~\ref{prop:dempster}), the IM responds by stretching out the aforementioned interval to roughly length 5.  

For comparison, the generalized Bayes IM described in Section~\ref{SS:stat} is vacuous in this case.  That is, thanks to the invulnerability and coherence constraints, the generalized Bayes IM is so conservative that it's unable to learn based on the observation $Y=y$.  The Dempster's rule-based IM sacrifices on invulnerability---but not on validity---in exchange for the ability to make non-trivial inferences in common cases like these where detailed prior information is lacking.  This illustrates the balance that I'm hoping to achieve by relaxing the invulnerability requirement in the direction of valid-but-vulnerable IMs.




\section{Conclusion}
\label{S:discuss}

In this paper, I start by considering Fisher's so-called ``underworld of probability,'' which describes---in terms of bets---the different levels or perspectives on uncertainty.  Each of these levels corresponds to an agent having different priorities and different forms of information available.  While Fisher doesn't say so directly, I believe that his motivation for writing about the underworld was to make an analogy between agents in his formulation and (some of) the key players in scientific inference, namely, 
\begin{itemize}
\item the statistician, or the one who quantifies uncertainty for the purpose of drawing inferences about scientifically relevant unknowns based on observed data, and 
\vspace{-2mm}
\item the scrutinizers of scientific inference, e.g., society and/or the scientific community, the ones who stand to lose if the statistician's inferences happen to be erroneous. 
\end{itemize}
Understanding the various priorities is imperative because scientific advances are now largely data-driven.  I use Fisher's underworld considerations as a jumping off point to define a notion of {\em invulnerability} that ensures no scrutinizer can identify and profit off of fallibility in the statistician's uncertainty quantification and inference.  Then Proposition~\ref{prop:suf} establishes that invulnerability can be attained, in particular, by what I call here the generalized Bayes inferential model (IM).  This establishes an unexpected and apparently new connection between Fisher's nuanced/elusive brand of probabilistic reasoning and the behavioral considerations of de~Finetti, Walley, Williams, and many other contributors to the (imprecise) probability literature.  

While having statistical motivations, the close connection between invulnerability and behavioral reliability considerations---and the well-known conservatism of generalized Bayes---suggests a relaxation in some direction might be needed to achieve the desired balance.  Towards this, Proposition~\ref{prop:nec} shows that the validity property (Definition~\ref{def:valid}) that I've been advocating for recently is a necessary condition for invulnerability.  This, together with the dual facts that generalized Bayes IMs are valid and valid IMs avoid sure loss, encourages relaxing the (apparently conservative) notion of invulernability in the direction of validity.  Section~\ref{S:balance} offers a particular IM construction---building on my vacuous prior IM constructions in previous work---that incorporates partial prior information via a suitable belief function and Dempster's rule of combination.  Then Proposition~\ref{prop:dempster} shows that, under certain conditions on the prior belief function, this partial prior IM is valid.  A simple example illustrates this proposal and shows how this IM construction might strike the desired balance: there's clear improvement compared to the generalized Bayes IM when the prior information is very limited, and even improvements compared to the vacuous prior IM when the data is compatible with the prior information.  

More recently, I've advanced a very general partial prior IM construction \citep{martin.partial2, martin.partial3} that achieves an even stronger notion of validity, what I call {\em strong validity}.  As the name suggests, strong validity is stronger than validity, which is easy to see from its representation as a ``uniform version'' of \eqref{eq:valid}:
\[ \uprob_{Y,\Theta}\{ \lPi_Y(H) > 1-\alpha \text{ for some $H$ with $H \not\ni \Theta$}\} \leq \alpha, \quad \alpha \in [0,1]. \]
See \citet{cella.martin.probing} for some important implications of this uniformity.  It turns out that strong validity is closely tied to consonance, possibility theory, and the construction of what I refer to as a {\em possibilistic IM} relies on outer consonant approximations \citep[e.g.,][]{montes.etal.2019, dubois.prade.1990} and the so-called imprecise-probability-to-possibility transform \citep[e.g.,][]{dubois.etal.2004, hose2022thesis}.  Consequently, possibilistic IMs have structure that the generalized Bayes and Dempster's rule-based IMs don't, and I argue in \citet{martin.partial2} that despite being largely statistically motivated and satisfying a number of important statistical properties, it has some desirable behavioral properties as well.  To summarize, the Venn diagram in Figure~\ref{fig:venn} illustrates that both strongly valid and invulnerable IMs are special cases of valid IMs, and suggests an interesting open question: are there any possibilistic IMs that are also invulnerable?  If so, then that IM would arguably settle the statistical--behavioral reliability balance question.

\begin{figure}
\begin{center}
\scalebox{0.8}{
\begin{tikzpicture}
\begin{scope} [fill opacity = .3]
\draw[fill=lightgray, draw=lightgray, rounded corners] (-7,6) rectangle (7,-4); 
\draw[fill=cyan, draw=cyan, rounded corners] (-6,5) rectangle (6,-3);
\draw[fill=red, draw=red] (-2,1) circle (3);
\draw[fill=blue, draw=blue] (2,1) circle (3);
\end{scope}
\node at (-5.5,-3.5) {\bf no-sure-loss}; 
\node at (-5, -2.5) {\bf valid};
\node at (-2.2,3.1) {\bf strongly valid};
\node at (2.2,3.1) {\bf invulnerable};
\node at (3.3, 0) {GenBayes}; 
\draw[fill=black] (2.2, 0) circle (0.08); 
\node at (0.55, -2.5) {Dempster}; 
\draw[fill=black] (-0.5,-2.5) circle (0.08); 
\node at (-2.5, -0.5) {Possibilistic}; 
\draw[fill=black] (-3.7, -0.5) circle (0.08); 
\node at (0,1) {\large\bf {\color{yellow}???}}; 
\end{tikzpicture}
}
\end{center}
\caption{Venn diagram to illustration the results presented in this paper.  The specific IM constructions---generalized Bayes, Dempster's rule, and possibilistic---are highlighted by points in respective regions they fall into.}
\label{fig:venn}
\end{figure}
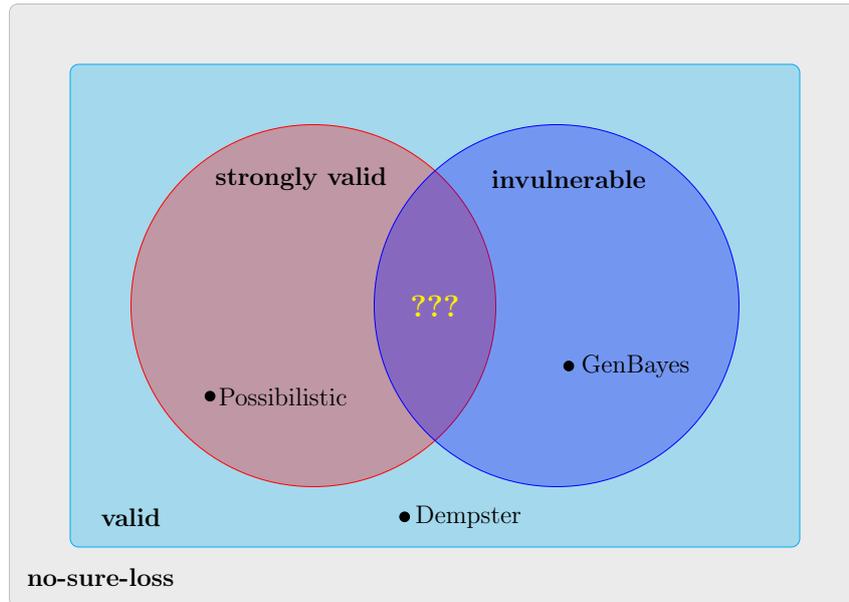

\section*{Acknowledgments}

Thanks to Teddy Seidenfeld for suggesting Fisher's ``underworld paper'' to me, and also for his critique following my {\em SIPTA Seminar} presentation (\url{https://www.youtube.com/watch?v=1VL5P1YcXdo}).  And special thanks to Enrique Miranda for some corrections and helpful suggestions concerning an earlier draft of this manuscript.  This work is partially supported by the U.S.~National Science Foundation, grant SES--2051225.






\bibliographystyle{apalike}
\bibliography{/Users/rgmarti3/Dropbox/Research/mybib}

\end{document}